\newcommand{\IN}{\mathbb N}
\newcommand{\IR}{\mathbb R}
\newcommand{\Ra}{\Rightarrow}
\newcommand{\w}{\omega}
\newcommand{\E}{\mathcal E}
\newcommand{\U}{\mathcal U}
\newcommand{\I}{\mathcal I}
\newcommand{\e}{\varepsilon}
\newcommand{\diam}{\mathrm{diam}\,}
\newcommand{\add}{\mathrm{add}}
\newcommand{\cov}{\mathrm{cov}}
\newcommand{\cof}{\mathrm{cof}}
\newcommand{\non}{\mathrm{non}}
\newcommand{\cf}{\mathrm{cf}}
\newtheorem{theorem}{Theorem}[section]
\newtheorem{proposition}[theorem]{Proposition}
\newtheorem{corollary}[theorem]{Corollary}
\newtheorem{problem}[theorem]{Problem}
\newtheorem{question}[theorem]{Question}
\newtheorem{claim}[theorem]{Claim}
\theoremstyle{definition}
\newtheorem{definition}[theorem]{Definition}
\newtheorem{example}[theorem]{Example}
\newtheorem{remark}[theorem]{Remark}
\begin{document}

%\Large

\title{Functional boundedness of balleans}
\author{Taras Banakh, Igor Protasov}
\address{T.Banakh: Faculty of Mechanics and Mathematics, Ivan Franko National University of Lviv (Ukraine) and\\ Institute of Mathematics, Jan Kochanowski University in Kielce (Poland)}
\email{t.o.banakh@gmail.com}
\address{I.Protasov: Faculty of Cybernetics, Taras Shevchenko National University in Kyiv, Ukraine}
\email{i.v.protasov@gmail.com}
\begin{abstract} We survey some results and pose some open problems related to boundedness of real-valued functions on balleans and coarse spaces. A special attention is payed to balleans on groups.%Also we prove that the Bergman property of groups is a coarse invariant.
\end{abstract}
\keywords{coarse structure, ballean, group, real-valued function}
\subjclass{20F65}
\maketitle

\section{Introduction}

A {\em ballean} is a pair $(X,\mathcal E_X)$ consisting of a set $X$ and a family $\mathcal E_X$ of subsets of the square $X\times X$  satisfying the following three axioms:
\begin{enumerate}
\item each $E\in\mathcal E_X$ contains the diagonal $\Delta_X=\{(x,x):x\in X\}$ of $X$;
\item for any $E,F\in\mathcal E_X$ there exists $D\in\mathcal E_X$ such that $E\circ F^{-1}\subset D$, where \newline $E\circ F:=\{(x,z):\exists y\;\;(x,y)\in E,\;(y,z)\in F\}$ and $F^{-1}:=\{(y,x): (x,y)\in F\}$.
\item $\bigcup\E_X=X\times X$.
\end{enumerate}
The family $\mathcal E_X$ is called the {\em ball structure} of the ballean $(X,\mathcal E_X)$ and its elements are called {\em entourages}. For each entourage $E\in\mathcal E_X$ and point $x\in X$ we can consider the set $E[x]:=\{y\in X:(x,y)\in E\}$, called the {\em ball of radius $E$} centered at $x$.
For a subset $A\subset X$ the set $E[A]:=\bigcup_{a\in A}E[x]$ is called the {\em $E$-neighborhood} of  $A$. Observe that $E=\bigcup_{x\in X}\{x\}\times E[x]$, so the entourage $E$ can be recovered from the family of balls $E[x]$, $x\in X$.

%A ballean $(X,\E_X)$ is defined to be {\em connected} if $\bigcup\E_X=X\times X$.

For a ballean $(X,\E_X)$ and a subset $Y\subset X$ the ballean $(Y,\E_X{\restriction}Y)$ endowed with the ball structure
$$\E_X{\restriction}Y:=\{(Y\times Y)\cap E:E\in\E_X\}$$is called a {\em subballean} of $X$.

Any metric space $(X,d)$ carries a natural ball structure $\{E_\e:\e>0\}$ consisting of the entourages $E_\e:=\{(x,y)\in X\times X:d(x,y)<\e\}$.

A ballean $(X,\mathcal E)$ is called a {\em coarse space} if for any entourage $E\in\mathcal E_X$, any subset $F\subset E$ with $\Delta_X\subset F$ belongs to $\mathcal E_X$.  In this case $\mathcal E_X$ is called the {\em coarse structure} of $X$. For a coarse structure $\E$, a subfamily  $\mathcal B\subset\E$ is called a {\em base} of $\mathcal E$ if each set $E\in\E$ is contained in some set $B\in \mathcal B$. It is easy to see that each base of a coarse structure is a ball structure. On the  other hand, each ball structure $\mathcal E$ on a set $X$ is a
base of the unique coarse structure $${\downarrow}\mathcal E:=\{E\subset X\times X:\Delta_X\subset E\subset F\mbox{ for some }F\in\mathcal B\}.$$

If the ball (or coarse) structure $\E_X$ is clear from the context, we shall write $X$ instead of $(X,\E_X)$. More information on balleans and coarse spaces can be found in
\cite{CH}, \cite{Dr}, \cite{DH},  \cite{NYu}, \cite{PB}, \cite{PZ}, \cite{Roe}.

A subset $B\subset X$ of a ballean $(X,\E_X)$ is called {\em bounded} if $B\subset E[x]$ for some $E\in\E_X$ and $x\in X$. A ballean $X$ is {\em bounded} if $X$ is a bounded set in $(X,\E_X)$.

The family $\mathcal B_X$ of all bounded subsets is called the {\em bornology} of the ballean $(X,\E_X)$. If the ballean $X$ is unbounded, then the bornology $\mathcal B_X$ is an ideal of subsets of $X$. A family $\I$ of subsets of a set $X$ is called an {\em ideal} on $X$ if $\I$ is closed under finite unions and taking subsets, and $\bigcup\I=X\notin\I$.

Given a ballean $X$ a real-valued function  $f: X \to\mathbb{R}$ is called
\begin{itemize}
\item {\it bornologous} if $f(B)$ is bounded for each $B\in\mathcal{B}_{X}$;
\item {\it macro-uniform} if  for every $E\in\mathcal{E}_{X}$ the supremum $\sup_{x\in X}\diam  f(E[x])$ is finite;
\item {\it eventually macro-uniform} if  for every $E\in\mathcal{E}_{X}$ there exist a bounded set $B\in\mathcal{B}_{X}$ such that $\sup_{x\in X\setminus B}\diam  f(E[x])$ is finite;
\item  {\it boundedly oscillating} if there exists $C > 0$  such that, for every $E\in\mathcal{E}_{X}$,
 there is $B\in\mathcal{B}_{X}$   such that $\diam  f(E[x])< C$  for each $x\in X \setminus B$;
\item  {\it slowly oscillating} if for every $E\in\mathcal{E}_{X}$  and $\varepsilon > 0$,
 there exists $B\in\mathcal{B}_{X}$ such that $\diam f(E[x])< \varepsilon$ for each $x\in X \setminus B$;
 \item {\em constant at infinity} if there exists $c\in\mathbb R$ such that for any neighborhood $O_c\subset\IR$ of $c$  the complement $X\setminus f^{-1}(O_c)$ is bounded in $X$.
 \end{itemize}
 \vspace{3 mm}

We say that  a ballean $X$  is

\begin{itemize}
\item  {\it b-bounded} if each bornologous function on $X$  is bounded;
\item {\it mu-bounded} if each macro-uniform function on $X$ is bounded;
\item  {\it emu-bounded} if for every eventually macro-uniform function on $X$ there exists $B\in\mathcal{B}_{X}$  such that $f$ is bounded on $X\setminus B$;
\item  {\it bo-bounded} if for every  boundedly oscillating function $f$ on $X$ there exists $B\in\mathcal{B}_{X}$ such that $f$ is bounded on $X\setminus B$;
 \item  {\it so-bounded} (or {\it pseudobounded}) if for every slowly oscillating function $f$ on $X$ there exists $B\in\mathcal{B}_{X}$ such that $f$ is bounded on $X\setminus B$.
\end{itemize}
 \vspace{3 mm}
 
For any ballean $X$ we have the implications: 
$$
\xymatrix{ 
\mbox{b-bounded}\ar@{=>}[r]&\mbox{mu-bounded}&\mbox{emu-bounded}\ar@{=>}[l]\ar@{=>}[r]&\mbox{bo-bounded}\ar@{=>}[r]&\mbox{so-bounded}.\\
}
$$

If a ballean $X$ is unbounded and $\mathcal{E}_{X}$ has a linearly ordered base, then $X$  is $b$-unbounded iff $X$  is mu-unbounded  iff  $\mathcal{E}_{X}$
has a countable base.

\vspace{3 mm}

%Протасівська вставка 1

We say that a ballean $X$  is {\it locally finite} if $\mathcal{B}_{X}=[X] ^{< \omega}$,
equivalently, each ball in $X$  is finite.

A locally finite ballean $X$ is emu-bounded if and only if $X$  is mu-bounded.
 If a locally finite ballean $X$ is bo-bounded (so-bounded), then every bo-function (so-function) on $X$ is bounded.
\vskip 5pt

\begin{example} 
Let $\kappa,  \mu$  be infinite cardinals  such that $\kappa\leq\mu$.
We denote by $S_{\kappa}$
 the group of all permutations of $\kappa$ and,  for each
  $F\in [S_{\kappa}]^{<\mu} $, put
  $E_{F}= \{(x, y)\in \kappa \times\kappa: y\in \{x\}\cup \{f(x)\}_{f\in F}\}$.
Then the family
$\{E_{F}:  F\in [S_{\kappa}]^{<\mu}\}$
forms a base for some coarse structure  $\mathcal{E}$  on $\kappa$.
We denote the ballean  $(\kappa, \mathcal{E})$ by
$B_{\kappa,\mu}$
and observe that a subset $B\subset \kappa$ is bounded if and only if  $|B|< \mu$.

We show that $B_{\kappa,\mu}$  is emu-bounded (and hence mu-bounded).

First we check that $B_{\kappa,\mu}$  is mu-bounded. Given any unbounded  function $f: \kappa\to\mathbb{R} $, we shall prove that $f$  is not macro-uniform.
We choose a sequence  $(x _{n}) _{n\in\omega}$ in $\kappa$ such that $|f(x_{n+1}) - f(x_{n})|> n$ and define a bijection  $h: \kappa\to \kappa$ by the rule $h(x_{2n})= x_{2n+1} $,  $h(x_{2n+1})= x_{2n} $   and $h(x)=x$    for all    $x\in X\setminus\{x_{n}: n\in\omega\}$.
Then  $\diam  f(E_{\{h\}} [x_{n}])> n$, which means that $f$ is not macro-unifiorm.

Next, we show that the ballean $B_{\kappa,\mu}$ is emu-bounded.
Given an eventually macro-uniform function $f:\kappa\to\IR$, we should find a set $B\subset\kappa$ of cardinality $|B|<\mu$ such that the set $f(\kappa\setminus B)$ is bounded.  If $\mu=\w$, then the eventually macro-uniform function $f$ is macro-uniform and $f$ is bounded by the mu-boundedness of $B_{\kappa,\mu}$. So, we assume that the cardinal $\mu$ is uncountable.

To derive a contradiction, assume that for any subset $B\subset \kappa$ of cardinality $|B|<\mu$, the set $f(\kappa\setminus B)$ is unbounded. By transfinite induction, for every ordinal $\alpha<\mu$ we can construct a sequence $\{x_{\alpha,n}\}_{n\in\w}\subset\kappa$ such that $|f(x_{\alpha,n+1})-f(x_{\alpha,n})|>n$ and $x_{\alpha,n}\notin \{x_{\beta,m}:\beta<\alpha,\;m\in\w\}$ for every $n\in\w$.
Consider a permutation $h\in S_\kappa$ such that $h(x_{\alpha,2n})=x_{\alpha,2n+1}$, $h(x_{\alpha,2n+1})=x_{\alpha,2n}$ for all $\alpha<\mu$ and $n\in\w$. Since the sets $\{x_{\alpha,n}\}_{n\in\w}$, $\alpha<\mu$, are pairwise disjoint,  for any set $B\subset \kappa$ of cardinality $|B|<\mu$ there exists an ordinal $\alpha<\mu$ such that $B\cap\{x_{\alpha,n}\}_{n\in\w}=\emptyset$. Then $\diam f(E_{\{h\}}[x_{\alpha,2n}])\ge \diam \{f(x_{\alpha,2n+1}),f(x_{\alpha,2n})\}>2n$, which means that $f$ is not eventually macro-uniform. But this contradicts the choice of $f$. This contradiction implies that for some set $B\subset\kappa$ of cardinality $|B|<\mu$ the set $f(\kappa\setminus B)$ is bounded.\hfill $\square$
\end{example}

Let $(X, \mathcal{E}),$   $(X^{\prime}, \mathcal{E}^{\prime})$  be balleans.
 A mapping $f:X\to  X^{\prime}$  is  called
 {\it macro-uniform} if for  every
 $E\in \mathcal{E}$ there is
 $E^{\prime}\in \mathcal{E}^{\prime}$
  such that 
  $f(E[x])\subseteq E^{\prime}(f[x])$ for all $x\in X$.
A bijection $f:X\to  X^{\prime}$
is called an {\it asymorphism} if  $f, f ^{-1}$  are macro-uniform.
The balleans $(X, \mathcal{E})$,  $(X ^{\prime}, \mathcal{E} ^{\prime})$
are called {\it coarsely equivalent}  if  there exist large subset
$Y\subseteq X$, $Y^{\prime}\subseteq X$
 such that the balleans 
 $(Y, \mathcal{E}{\restriction}Y)$ and $(Y^{\prime}, \mathcal{E}^{\prime}{\restriction}{Y^{\prime}})$
 are asymorphic.
A subset $Y$ is  called {\it large} if $X=E[Y]$  for some $E\in \mathcal{E}$.

It can be shown that all above defined boundedness are stable under coarse equivalences.

%кінець вставки 1

\section{Tall and antitall bornologies}

By a {\em bornology} on a set $X$ we understand any family $\mathcal B$ of subsets of $X$ such that $\bigcup\mathcal B=X$ and $\mathcal B$ is closed under finite unions and taking subsets. In particular, for any infinite cardinal $\kappa$ the family $$[X]^{<\kappa}:=\{A\subset X:|A|<\kappa\}$$ is a bornology on $X$.

A bornology $\mathcal{B}$  on a set $X$  is called {\it tall} if every  infinite subset of $X$  has an infinite bounded subset. A bornology  $\mathcal{B}$ is tall iff every bornologous function on $X$ is bounded. Hence, a ballean $X$ is $b$-bounded iff its bornology $\mathcal{B}_{X}$  is tall.

A bornology $\mathcal{B}$ on $X$ is called {\it antitall} if every unbounded subset of $X$
contains an unbounded subset $Y$
 such that all bounded subset of $Y$ are finite.
 A bornology $\mathcal{B}$ is antitall   iff for every unbounded subset $Y$ of $X$
 there is a bornologous function on $X$ such that $f$ is unbounded on $Y$.

By Proposition 1 \cite{Pr18}, each bornology is the intersection of a tall and an antitall bornologies.

We endow $X$  with the discrete  topology and put $X^{\ast}= \beta X\setminus X$ and
$$\mathcal{B}_{X}^{\sharp} = \{ p\in X^{\ast}: \forall P\in p,\;\;P\notin \mathcal{B}_{X}\}.$$
Then $\mathcal{B}$ is tall iff $\mathcal{B}_{X}^{\sharp} $ is nowhere dense in
$X^{\ast}$, and $\mathcal{B}$ is antitall iff $int \ \mathcal{B}_{X}^{\sharp} $ is dense $\mathcal{B}_{X}^{\sharp} $.

\section{Discrete balleans}

A ballean $X$ is {\em discrete} if $X$ is not bounded and for any $E\in\E_X$ there exists a bounded set $B\subset X$ such that $E[x]=\{x\}$ for all $x\in X\setminus B$.
More information on discrete balleans can be found in  \cite[Chapter 3]{PZ}.

\begin{example}\label{ex:d} Let $X$ be a discrete ballean whose bornology coincides with the family $[X]^{\le\w}$ of at most countable sets on an uncountable set $X$. The ballean $X$ is mu-bounded but each function $f:X\to\mathbb R$ is slowly oscillating and hence eventually macro-uniform. So, $X$ is not emu-bounded and not so-bounded.
\end{example}

Since each macro-uniform function is eventually macro-uniform, each emu-bounded ballean is mu-bounded. Example~\ref{ex:d} shows that the converse implication does not hold even for discrete balleans.  

\begin{proposition} For a ballean $X$,  the following statements are  equivalent
\begin{enumerate}
\item every    function  $f: X\to \mathbb{R}$  is  macro-uniform;
\item  $X$  is  discrete and  $\mathcal{B}_{X} = [X] ^{< \omega}$.
\end{enumerate}
\end{proposition}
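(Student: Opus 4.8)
The plan is to treat the two implications separately; $(2)\Ra(1)$ is routine, and essentially all the work lies in showing that $(1)$ forces $X$ to be discrete. For $(2)\Ra(1)$, assume $X$ is discrete with $\mathcal B_X=[X]^{<\w}$, and fix an arbitrary $f:X\to\IR$ and $E\in\E_X$. Discreteness supplies a bounded, hence finite, set $B$ with $E[x]=\{x\}$ for all $x\in X\setminus B$, so that $\diam f(E[x])=0$ off $B$. For the finitely many $x\in B$ the ball $E[x]$ is bounded and therefore finite, so each $\diam f(E[x])$ is finite; taking the maximum over the finite set $B$ gives $\sup_{x\in X}\diam f(E[x])<\infty$. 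Hence $f$ is macro-uniform.

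For $(1)\Ra(2)$ I would first show that every ball is finite. If some ball $E[x_0]$ were infinite, enumerate distinct points $z_n\in E[x_0]$ and define $f(z_n)=n$ with $f\equiv 0$ elsewhere; then $\diam f(E[x_0])=\infty$, so $f$ is not macro-uniform, contradicting $(1)$. Consequently every bounded set, being contained in a ball, is finite, so $\mathcal B_X\subseteq[X]^{<\w}$. In particular, as $X$ is infinite it cannot be bounded (a bounded $X$ would be a single finite ball), so $X$ is unbounded and its bornology $\mathcal B_X$ is an ideal, hence closed under finite unions. Since each singleton is bounded (by axiom (3)), every finite set is then bounded, giving $[X]^{<\w}\subseteq\mathcal B_X$ and therefore $\mathcal B_X=[X]^{<\w}$.

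It remains to prove discreteness: for each $E\in\E_X$ the set $A:=\{x\in X:E[x]\neq\{x\}\}$ is finite. The key preliminary fact is that for every point $s$ the set $\{x:s\in E[x]\}$ is finite. Indeed, applying axiom (2) to the pair $(E,E)$ yields $D\in\E_X$ with $E^{-1}\subseteq E\circ E^{-1}\subseteq D$, whence $\{x:s\in E[x]\}\subseteq D[s]$, a finite ball. Granting this, suppose towards a contradiction that $A$ is infinite and construct pairwise-disjoint pairs $\{a_n,b_n\}$ recursively: given $a_1,b_1,\dots,a_k,b_k$ with union $S$, the set $\{a\in A:E[a]\cap S\neq\emptyset\}=\bigcup_{s\in S}\{a:s\in E[a]\}$ is finite, so some $a_{k+1}\in A$ satisfies $E[a_{k+1}]\cap S=\emptyset$; choosing $b_{k+1}\in E[a_{k+1}]\setminus\{a_{k+1}\}$ (nonempty since $a_{k+1}\in A$) produces a pair disjoint from $S$. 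Setting $f(b_n)=n$ and $f\equiv 0$ elsewhere, disjointness forces $f(a_n)=0$, so $\diam f(E[a_n])\ge n$ for all $n$ and $f$ is not macro-uniform, contradicting $(1)$. Hence $A$ is finite, and $X$ is discrete.

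The main obstacle is exactly this last step: a priori the balls $E[x]$ over different $x\in A$ may overlap heavily, or even share a common point, so one cannot immediately extract infinitely many disjoint witnessing pairs. The finiteness of $\{x:s\in E[x]\}$ — which rests on all balls being finite together with $E^{-1}$ being dominated by an entourage $D$ via axiom (2) — is precisely what controls these overlaps and keeps the greedy selection alive at every finite stage.
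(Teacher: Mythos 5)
Your proof is correct and follows essentially the same route as the paper's: the implication $(2)\Rightarrow(1)$ is routine in both, and for $(1)\Rightarrow(2)$ both arguments derive a contradiction from non-discreteness by building a function equal to $n$ on one point of each of infinitely many pairwise disjoint witnesses and $0$ elsewhere. The only difference is that you supply the justification for extracting that disjoint family (the finiteness of $\{x:s\in E[x]\}=E^{-1}[s]\subseteq D[s]$ via axiom (2), plus a greedy selection), a step the paper's proof simply asserts.
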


\begin{proof} $(2)\Ra  (1)$ is  evident. To verify $(1)\Ra (2)$,  assume that each rael-valued function on $X$ is macro-uniform. In this case $\mathcal{B}_{X} = [X] ^{< \omega}$. To derive a contradiction, assume that $X$  is not discrete. Then there exist  $E \in \mathcal{E}_{X}$  and a sequence $(x_{n})  _{n \in \omega}$  in $X$ such that the subsets $\{E [x_{n}] :  n \in \omega\}$ are  pairwise disjoint and $|E [x_{n}]| > 1$ for all $n \in \omega$. Then the function $f: X\to \mathbb{R}$  defined by $f(x_{n})=n$    and  $f(x)=0$  for $x\notin\{x_{n} :  n \in \omega \}$ is  not macro-uniform, which contradicts our assumption.
\end{proof}

The following characterization can be easily derived from the definitions.

\begin{proposition} For a discrete ballean $X$ the following conditions are equivalent:
\begin{enumerate}
\item $X$ is b-bounded;
\item $X$ is mu-bounded;
\item the bornology of $X$ is tall.
\end{enumerate}
\end{proposition}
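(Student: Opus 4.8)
The plan is to treat the three implications separately and reduce everything to a single nontrivial one. The implication $(1)\Ra(2)$ is free: it is the first arrow b-bounded $\Ra$ mu-bounded in the diagram of the Introduction. The equivalence $(1)\Leftrightarrow(3)$ is also immediate, since Section~2 records that a ballean is $b$-bounded if and only if its bornology is tall. Hence the only thing left to prove is $(2)\Ra(1)$, and I would prove it contrapositively: assuming $\mathcal B_X$ is not tall, I will construct an unbounded macro-uniform function, thereby showing that $X$ is not mu-bounded.

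For the construction, I would first unpack non-tallness: there is an infinite set $A\subset X$ every bounded subset of which is finite. Fixing an injective sequence $(a_n)_{n\in\w}$ in $A$, I would define $f:X\to\IR$ by $f(a_n)=n$ and $f(x)=0$ for $x\notin\{a_n:n\in\w\}$. This $f$ is plainly unbounded, so the whole issue becomes verifying that $f$ is macro-uniform.

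To check macro-uniformity, fix $E\in\E_X$. Discreteness gives a bounded set $B$ with $E[x]=\{x\}$ for all $x\in X\setminus B$, so $\diam f(E[x])=0$ outside $B$. The remaining, and decisive, step is to bound $\diam f(E[x])$ uniformly for $x\in B$. Here I would invoke the stability of the bornology under enlargements (if $B$ is bounded, so is $E[B]=\bigcup_{x\in B}E[x]$): then $E[B]\cap\{a_n:n\in\w\}$ is a bounded, hence finite, subset of $A$, so only finitely many indices $n\le N$ occur among the $a_n$ lying in $E[B]$. This forces $f(E[x])\subseteq f(E[B])\subseteq\{0,1,\dots,N\}$ for every $x\in B$, whence $\sup_{x\in X}\diam f(E[x])\le N<\infty$ and $f$ is macro-uniform.

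The main obstacle is precisely this control on the bounded exceptional set $B$: discreteness by itself only tames $f$ away from $B$, and what rescues the argument inside $B$ is the combination of enlargement-stability of $\mathcal B_X$ with non-tallness, which together guarantee that the unbounded values $f(a_n)=n$ cannot accumulate inside the bounded set $E[B]$. Once this is in place the two cases combine, $f$ witnesses mu-unboundedness, and the contrapositive is established.
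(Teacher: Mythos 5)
Your proof is correct, and it is essentially the argument the paper intends: the paper gives no proof of this proposition (it is stated as ``easily derived from the definitions''), and your reduction to $(2)\Ra(3)$ via the witness function $f(a_n)=n$, $f=0$ elsewhere, is exactly the construction the paper itself uses in the proof of the proposition immediately preceding this one. The only step you assert without verification --- that $E[B]$ is bounded whenever $B$ is --- is a standard consequence of ballean axiom (2) (one first dominates $E^{-1}$ by some $F\in\E_X$, then dominates the composition $E'\circ E\subset E'\circ F^{-1}$ by some $D\in\E_X$, so $E[B]\subset(E'\circ E)[x_0]\subset D[x_0]$), so this is not a gap.
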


A characterization of emu-bounded discrete balleans is more complicated and involves countably complete ultrafilters.

A filter $\mathcal F$ on a set $X$ is called {\em countably-complete} if for any countable subfamily $\mathcal C\subset\mathcal F$ the intersection $\bigcap\mathcal C$ belongs to the filter $\mathcal F$. 

A cardinal $\kappa$ is {\em measurable} if there exists a countably complete free ultrafilter $\U$ on $\kappa$. A measurable cardinal, if exists, is not smaller than the first strongly inaccessible cardinal, see \cite[Ch.10]{Jech}. 
This implies that the existence of measurable cardinals cannot be proved in ZFC. 

\begin{proposition}\label{p3.2}
For a ballean  $X$,  the  following  statements are equivalent:
\begin{enumerate}
\item every  function  $f: X\to  \mathbb{R}$  is  slowly  oscillating;
\item every  function  $f: X\to  \{0,1\}$  is  slowly  oscillating.
\item  $X$ is discrete.
\end{enumerate}
If any unbounded subset of $X$ contains an unbounded set of non-measurable cardinality, then the conditions \textup{(1)--(3)} are equivalent to:
\begin{enumerate}
\item[(4)] every  function  $f: X\to  \mathbb{R}$  is boundedly oscillating;
\item[(5)]  every    function  $f: X\to \mathbb{R}$  is  eventually macro-uniform.
\end{enumerate}
In particular, the conditions \textup{(1)--(5)} are equivalent if no measurable cardinal exists.
\end{proposition}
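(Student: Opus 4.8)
Throughout I assume $X$ is unbounded (for bounded $X$ conditions (1),(2),(4),(5) hold vacuously while (3) fails, so the statement is understood for unbounded balleans). I would establish the first block through $(1)\Ra(2)$, $(3)\Ra(1)$, $(2)\Ra(3)$, and the second block through the formal $(1)\Ra(4)\Ra(5)$ together with the substantial $(5)\Ra(3)$. Here $(1)\Ra(2)$ is trivial, and $(3)\Ra(1)$ is easy: if $X$ is discrete, the bounded set $B$ with $E[x]=\{x\}$ off $B$ makes $\diam f(E[x])=0<\e$, so every $f$ is slowly oscillating. The steps $(1)\Ra(4)$ and $(4)\Ra(5)$ are also formal, since for a single function slow oscillation implies bounded oscillation (take $C=1$, use $\e=1$) and bounded oscillation implies eventual macro-uniformity (the same $B$ bounds $\sup_{x\notin B}\diam f(E[x])$ by $C$).

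The combinatorial core of the first block is $(2)\Ra(3)$, which I would prove contrapositively. If $X$ is not discrete, fix a symmetric entourage $E$ (enlarging a witness, using that in a ballean $E\cup E^{-1}$ lies in some entourage) with $Y:=\{x:|E[x]|\ge 2\}$ unbounded. Regard $E$ as a graph $G$ on $Y$ (join $x\ne y$ when $(x,y)\in E$): every vertex of $Y$ has degree $\ge 1$ and, by symmetry, every edge lies inside $Y$. Choosing a spanning tree in each connected component and using its bipartition, I obtain a colouring $c\colon X\to\{0,1\}$ so that every $z\in Y$ has a neighbour $\psi(z)\in E[z]\setminus\{z\}$ with $c(\psi(z))\ne c(z)$. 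Putting $A=c^{-1}(1)$ gives $\diam\chi_A(E[z])=1$ for every $z\in Y$, and as $Y$ is unbounded, $\chi_A$ is not slowly oscillating, contradicting (2). This colouring is also the key device for $(5)\Ra(3)$.

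For $(5)\Ra(3)$, again contrapositively: if $X$ is not discrete, fix $E,Y,c,\psi$ as above and, by hypothesis, an unbounded $Z\subseteq Y$ of non-measurable cardinality. The crucial input is a set-theoretic lemma: for the proper ideal $\mathcal I=\{A\subseteq Z:A\text{ bounded in }X\}$ (which contains all finite sets) there is $g\colon Z\to\w$ each of whose upper level sets $\{z:g(z)>n\}$ is $\mathcal I$-positive, i.e. unbounded. Granting this, set $m=g$ on $Z$, $m=0$ off $Z$, and define $f(x)=(-1)^{c(x)}m(x)$. For $z\in Z$ the points $z,\psi(z)\in E[z]$ carry opposite colours, so $|f(z)-f(\psi(z))|=m(z)+m(\psi(z))\ge g(z)$; hence $\sup_{x\notin B}\diam f(E[x])\ge\sup_{z\in Z\setminus B}g(z)=\infty$ for every bounded $B$, and $f$ is not eventually macro-uniform, contradicting (5). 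The sign-flip by $c$ is exactly what stops the two large values from cancelling and removes any need to separate $Z$ from $\psi(Z)$.

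The lemma is where measurability enters, and it is the main obstacle. I would argue the contrapositive: absence of such $g$ forces $|Z|$ measurable. If $\mathcal I$ is not $\sigma$-complete, witnessed by increasing $B_n\in\mathcal I$ with $A=\bigcup_nB_n\notin\mathcal I$, then $g(x)=\min\{n:x\in B_n\}$ on $A$ (and $0$ off $A$) has positive upper level sets $A\setminus B_n$; so $\mathcal I$ must be $\sigma$-complete. Next, the nonexistence of $g$ forbids any countably infinite family of pairwise disjoint positive sets (otherwise sending the $n$-th such set to $n$ produces $g$); equivalently $\mathcal P(Z)/\mathcal I$ has no infinite antichain, and a Boolean algebra with no infinite antichain is finite (an atomless part would give an infinite antichain, and infinitely many atoms would too). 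Thus there is an atom $S\subseteq Z$: a positive set not splitting into two positive sets. Then $\{T\subseteq S:S\setminus T\in\mathcal I\}$ is a free ultrafilter on $S$, countably complete because $\mathcal I$ is, and pushing it forward to $Z$ yields a countably complete free ultrafilter on $Z$, so $|Z|$ is measurable—a contradiction. If no measurable cardinal exists the hypothesis holds automatically, giving the final assertion. The points I would check most carefully are the countable completeness of the ultrafilter on the atom and the reduction "no infinite antichain $\Ra$ finite", which is precisely where the non-existence of countably complete free ultrafilters on non-measurable cardinals is used.
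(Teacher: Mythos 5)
Your proof is correct, and it takes a genuinely different route from the paper's. The paper does not prove the equivalence $(1)\Leftrightarrow(2)\Leftrightarrow(3)$ at all (it cites Theorem 3.3.1 of \cite{PZ}), whereas you prove $(2)\Rightarrow(3)$ by a graph-bipartition colouring, which you then reuse for $(5)\Rightarrow(3)$; this makes your argument self-contained. In the core implication $(5)\Rightarrow(3)$ the paper's contrapositive differs on both fronts. Combinatorially, instead of your colouring $c$ and sign-flip $f=(-1)^{c}g$, the paper uses Zorn's Lemma to pick a maximal set $M\subset L=\{x\in X: E[x]\neq\{x\}\}$ with pairwise disjoint balls, so that its function vanishes on $E[x]\setminus\{x\}$ for $x\in M$; both devices serve the same purpose (preventing two large values inside one ball from cancelling), and yours avoids the need for a disjoint family. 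Set-theoretically, the paper bypasses your dichotomy lemma: it extends the filter $\{N\setminus B:B\in\mathcal B_X\}$ to an arbitrary free ultrafilter $\mathcal U$, notes that non-measurability of $|N|$ forces $\mathcal U$ not to be countably complete, and converts a witnessing decreasing sequence $U_n\in\mathcal U$ with $\bigcap_{m}U_m\notin\mathcal U$ directly into the bad function ($f=n$ on $U_n\setminus U_{n+1}$). That short argument actually proves your lemma as well: the sets $U_n\setminus\bigcap_m U_m$ belong to $\mathcal U$, hence are unbounded, and they are upper level sets of $f$. So your excursion through $\sigma$-completeness of the ideal, non-existence of infinite antichains, finiteness of $\mathcal P(Z)/\mathcal I$, and the atom-to-ultrafilter construction, while correct, is heavier than necessary; on the other hand, it isolates a reusable dichotomy and shows exactly where measurability enters. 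Two small points: (i) a ballean need not contain a \emph{symmetric} entourage, so your normalization should be phrased as working with the symmetric relation $E_0\cup E_0^{-1}$, which is contained in some entourage $D$; since your estimates are lower bounds on diameters and $E$-balls lie inside $D$-balls, they transfer to $D$, which is what the definitions of slow oscillation and eventual macro-uniformity require; (ii) your remark that the statement must be read for unbounded $X$ (conditions (1), (2), (4), (5) hold vacuously for bounded $X$ while (3) fails, since discreteness presupposes unboundedness) is a legitimate point that the paper passes over silently.
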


\begin{proof} The equivalence of the conditions (1)--(3) has been proved in  Theorem 3.3.1 of \cite{PZ} and the implications $(1)\Ra(4)\Ra(5)$ are trivial. Now assuming that any unbounded subset of $X$ contains an unbounded set of non-measurable cardinality, we shall prove that $(5)\Ra(3)$ (which is equivalent to $\neg(3)\Ra\neg(5)$).  Assume that $X$ is not discrete. Then there exists an entourage $E\in \mathcal{E}_{ X}$  such that the set $L=\{x\in X:E[x]\ne\{x\}\}$ is unbounded. Using Zorn's Lemma, find a maximal subset $M\subset L$ such that $E[x]\cap E[y]=\emptyset$ for any distinct elements $x,y\in M$. The maximality of $M$ ensures that $L\subset E^{-1}[E[M]]$, which implies that the set $M$ is unbounded. By our assumption, the unbounded set $M$ contains an unbounded subset $N\subset M$ of non-measurable cardinality $|N|$. Using the Zorn's Lemma, extend the free filter $\{N\setminus B:B\in\mathcal B_X\}$ to any (free) ultrafilter $\mathcal U$. Since $N\in\mathcal U$ and the cardinal $|N|$ is not measurable, the ultrafilter $\mathcal U$ is not countably-complete, so there exists a decreasing sequence of subsets $\{U_n\}_{n\in\w}\subset\mathcal U$ of $N$ such that $\bigcap_{n\in\w}U_n\notin\mathcal U$.
Consider the function $f:X\to \IR$ assiging to each $x\in X$ the number
$$
f(x)=\begin{cases}
n&\mbox{if $x\in U_n\setminus U_{n+1}$ for some $n\in\w$};\\
0&\mbox{otherwise}.
\end{cases}
$$
Assuming that $f$ is eventually macro-uniform, we could find a bounded set $B\subset X$ such that $C:=\sup_{x\in X\setminus B}f(E[x])<\infty$. Choose any number $n\in\w$ with $n>C$. It follows from $U_n\setminus B=U_n\cap(X\setminus B)\in\U$ and $\bigcap_{m\in\w}U_m\notin\mathcal U$ that $U_n\setminus B\not\subset\bigcap_{m\in\w}U_m$ and hence $U_n\setminus B\not\subset U_m$ for some $m\in\w$. So, we can find a point $x\in U_n\setminus B$ such that $x\notin U_m$. It follows that $x\in U_k\setminus U_{k+1}$ for some $k\in[n,m)$ and hence $\diam f(E[x])\ge \diam\{f(x),0\}=k\ge n>C$, which contradicts the definition of $C$. This contradiction shows that the function $f$ is not eventually macro-uniform, which means that the condition (5) does not hold.
\end{proof}

\begin{example}\label{e3.4} Let $X$  be a set of measurable cardinality $|X|$.
We define a coarse  structure  $\mathcal{E}$  on $X$  such that $(X, \mathcal{E})$  is not discrete but each function 
 $f: X \to \mathbb{R}$  is eventually macro-uniform. Write $X$ as the union $X= Y \cup Z$ of two disjoint sets of the same cardinality and fix any bijection  $h: Y \to Z$. Let $\Gamma:=\{(x,h(x)):x\in Y\}$ be the graph of the bijection $h$.
 
 By the measurability of the cardinal $|X|=|Y|$, there exists a countably complete ultrafilter $p$ on $Y$.  For any set $P\in p$ consider the subset 
 $$B_P:=(Y\setminus P)\cup h(Y\setminus P)$$of $X$ and the entourage
 $$E_P:=(B_P\times B_P)\cup \Delta_X\cup \Gamma\cup \Gamma^{-1}$$on $X$. Let $\mathcal E$ be the coarse structure on $X$, generated by the base $\{E_P:P\in p\}$. It is easy to check that the bornology of the coarse space $(X,\mathcal E)$ is generated by the base $\{B_P:P\in p\}$. Using this fact and looking at the definition of the basic entourages $E_{P}$, we can conclude that the ballean $(X,\mathcal E_X)$ is not discrete. 
 
Now we check that each function $f:X\to\IR$ is eventually macro-uniform. Given any set $P\in p$, we should find a bounded set $B\in\mathcal B_X$ such that $\sup_{x\in X\setminus B}\diam f(E_P[x])<\infty$. Observe that for any $x\in P=Y\setminus B_P$ we get $E_P[x]=\{x,h(x)\}=E_P[h(x)]$. This implies that $\diam f(E_P[x])<\infty$ and hence $P=\bigcup_{n\in\w}P_n$ where $P_n=\{x\in P:\diam f(E_P[x])\le n\}$ for $n\in\w$. Since the ultrafilter $p$ is countably complete, there exists $n\in\w$ such that $P_n\in p$. Then for the bounded set $B=B_{P_n}$ and any $x\in X\setminus B$ we get $\diam f(E_{P}[x])\le n$. Indeed, if $x\in Y$, then $x\in Y\setminus B_{P_n}=P_n$ and hence $\diam f(E_{P}[x])=\diam\{x,h(x)\}\le n$ by the definition of $P_n$. If $x\in Z$, then $x\in Z\setminus B_{P_n}=h(P_n)$ and then $h^{-1}(x)\in P_n$ and $E_P[x]=\{x,h^{-1}(x)\}=E_P[h^{-1}(x)]$ and $\diam f(E_P[x])=\diam f(E_P[h^{-1}(x)])\le n$.\hfill$\square$
\end{example}

Given a ballean $X$,  we denote by $X^{\sharp}$   the set of all  ultrafilters $p$  on $X$ such that each member of $p$ is unbounded in $X$. It is easy to see that the set $X^\sharp$ is closed in the Stone-\v Cech extension $\beta X$ of $X$ endowed with the discrete topology.

\begin{proposition}\label{p3.5} For a discrete ballean $X$,  the  following  statements are  equivalent
\begin{enumerate}
\item $X$  is emu-bounded;
\item $X$  is bo-bounded;
\item  $X$  is so-bounded;
%\smallskip
\item $X^\sharp$  is  finite and  each ultrafilter $p\in X^{\sharp}$ is countably complete;
\item each ultrafilter $p\in X^{\sharp}$ is countably complete. 
\end{enumerate}
\end{proposition}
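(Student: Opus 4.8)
The plan is to exploit the fact that on a discrete ballean the three dynamic boundedness notions (1)--(3) collapse to one elementary condition, and then to characterize that condition through countable completeness of the ultrafilters in $X^\sharp$.

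First I would carry out the reduction. Since $X$ is discrete, for every $E\in\E_X$ there is a bounded set $B$ with $E[x]=\{x\}$ for all $x\in X\setminus B$, so $\diam f(E[x])=0$ off $B$ for \emph{every} function $f:X\to\IR$. Hence every real function on $X$ is slowly oscillating (this is the content of Proposition~\ref{p3.2}), and a fortiori boundedly oscillating and eventually macro-uniform. Consequently each of ``emu-bounded'', ``bo-bounded'', ``so-bounded'' unravels to the single condition
$$(\ast)\qquad\text{for every }f:X\to\IR\text{ there is }B\in\mathcal{B}_X\text{ with }f\text{ bounded on }X\setminus B,$$
so that $(1)\Leftrightarrow(2)\Leftrightarrow(3)\Leftrightarrow(\ast)$. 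As $(4)\Ra(5)$ is trivial, it remains to prove $(\ast)\Leftrightarrow(5)$ and $(5)\Ra(4)$.

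Next I would establish $(\ast)\Leftrightarrow(5)$, which is the conceptual core. For $(5)\Ra(\ast)$ fix $f$ and put $A_N=f^{-1}([-N,N])$, so $\bigcup_{N}A_N=X$. If no $X\setminus A_N$ were bounded, the decreasing sets $X\setminus A_N$ together with all cobounded sets would have the finite intersection property and generate an ultrafilter $q$; every member of $q$ is then unbounded, so $q\in X^\sharp$, yet $q$ contains the countable family $\{X\setminus A_N\}_{N\in\w}$ with empty intersection, contradicting countable completeness. Thus some $X\setminus A_N$ is bounded and $(\ast)$ holds. For the contrapositive of $(\ast)\Ra(5)$, take $p\in X^\sharp$ that fails to be countably complete, witnessed by a decreasing sequence $U_n\in p$ with $\bigcap_{n}U_n\notin p$, and define $f$ to equal $n$ on $U_n\setminus U_{n+1}$ and $0$ elsewhere. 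For any bounded $B$ we have $X\setminus B\in p$, and since $U_n\cap(X\setminus B)\in p$ while $\bigcap_m U_m\notin p$, the set $U_n\cap(X\setminus B)$ contains points outside some $U_m$, i.e. points of $f$-value $\ge n$ lying in $X\setminus B$; as $n$ is arbitrary, $f$ witnesses the failure of $(\ast)$.

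The main obstacle is the finiteness implication $(5)\Ra(4)$; since $(\ast)\Leftrightarrow(5)$ it suffices to show that if $X^\sharp$ is infinite then $(\ast)$ fails. I would call $A\subseteq X$ \emph{large} if $\{p\in X^\sharp:A\in p\}$ is infinite, and prove a splitting step: if $A$ is large, choosing distinct $p\neq q$ with $A\in p\cap q$ and a set $C\in p$ with $C\notin q$ yields a partition $A=(A\cap C)\sqcup(A\setminus C)$ into two \emph{unbounded} pieces (since $A\cap C\in p\in X^\sharp$ and $A\setminus C\in q\in X^\sharp$), at least one of which is again large (because $\{r:A\in r\}\subseteq\{r:A\cap C\in r\}\cup\{r:A\setminus C\in r\}$). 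Iterating from $B_0=X$ produces a decreasing sequence of large sets $B_n$ with $B_n=Y_n\sqcup B_{n+1}$, giving pairwise disjoint unbounded sets $Y_n$ (for $m>n$ one has $Y_m\subseteq B_{n+1}$, which is disjoint from $Y_n$). The function equal to $n$ on each $Y_n$ and $0$ elsewhere is then unbounded on every cobounded set, because each unbounded $Y_n$ meets $X\setminus B$ for every bounded $B$; hence $(\ast)$ fails. This yields $(5)\Ra X^\sharp$ finite, and together with $(5)$ gives $(4)$, closing the circle $(1)\Leftrightarrow(2)\Leftrightarrow(3)\Leftrightarrow(5)\Leftrightarrow(4)$. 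The remaining equivalences are routine once $(\ast)$ has been isolated; the genuine work is the largeness-splitting construction furnishing infinitely many pairwise disjoint unbounded sets.
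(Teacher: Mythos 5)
Your proposal is correct, and it follows a genuinely different route from the paper's proof. The paper treats $(1)\Ra(2)\Ra(3)$ as evident, outsources $(3)\Ra(4)$ to Theorem 3.3.2 of \cite{PZ}, proves $(4)\Ra(1)$ by combining the finiteness of $X^\sharp$ with countable completeness (using that $\bigcap_{i=1}^k p_i$ is exactly the filter of cobounded sets), and proves $(5)\Ra(4)$ topologically: $X^\sharp$ is closed in $\beta X$, so if it were infinite it would contain a discrete sequence $(p_n)_{n\in\w}$ of distinct ultrafilters whose limit $p=\lim_{n\to\U}p_n$ along a free ultrafilter $\U$ on $\w$ again lies in $X^\sharp$ but fails countable completeness, as witnessed by the tails $Q_n=\bigcup_{m\ge n}P_m$ of a disjointization $P_n\in p_n$. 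You do three things differently. First, you use discreteness to collapse (1)--(3) into the single condition $(\ast)$, which makes the external citation unnecessary: on a discrete ballean every real-valued function is slowly oscillating, hence all three function classes coincide with the class of all functions. Second, you prove $(5)\Leftrightarrow(\ast)$ directly, with no detour through the finiteness of $X^\sharp$: your $(5)\Ra(\ast)$ argument (extend $\{X\setminus A_N\}_{N\in\w}$ together with all cobounded sets to an ultrafilter, which then lies in $X^\sharp$ yet cannot be countably complete) replaces the paper's $(4)\Ra(1)$, while your $(\ast)\Ra(5)$ is essentially the same construction the paper uses for $(5)\Ra(3)$ in Proposition~\ref{p3.2}. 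Third, your finiteness step is combinatorial rather than topological: splitting ``large'' sets produces pairwise disjoint unbounded sets $Y_n$, and from them an explicit function violating $(\ast)$, with no appeal to compactness of $\beta X$ or to limits of ultrafilters. The paper's route is shorter on the page (it reuses \cite{PZ} and the Stone--\v Cech topology, and it directly exhibits a non-countably-complete ultrafilter inside an infinite $X^\sharp$); yours is self-contained and elementary, and the isolated condition $(\ast)$ gives the equivalences a cleaner logical skeleton.
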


\begin{proof} The implication $(3)\Ra (4)$ is proved in Theorem 3.3.2 of \cite{PZ} and $(1)\Ra (2)\Ra(3)$  are evident. 
\smallskip

$(4)\Ra(1)$ Assume that $X^\sharp$ is finite and each ultrafilter $p\in X^\sharp$ is countably complete. To show that $X$ is emu-bounded, take any eventually macro-uniform function $f:X\to\IR$. Let $X^\sharp=\{p_1,\dots,p_k\}$ and observe that $\bigcap_{i=1}^kp_k=\{X\setminus B:B\in\mathcal B_X\}$ where $\mathcal B_X$ is the bornology of the ballean $X$. For every $n\in\w$ consider the set $X_n=\{x\in X:|f(x)|\le n\}$ and observe that $X=\bigcup_{n\in\w}X_n$. For every $i\le k$, the countable completeness of the ultrafilters $p_i\ni \bigcup_{n\in\w}X_n$ yields a number $n_i\in\w$ such that  $X_{n_i}\in p_i$. Then for the number $n=\max_{i\le k}n_i$, we obtain $X_n\in \bigcap_{i=1}^kp_i=\{X\setminus B:B\in\mathcal B_X\}$. Therefore, the complement $B:=X\setminus X_n$ is bounded and $f(X\setminus B)=f(X_n)\subset[-n,n]$ is bounded in the real line.
\smallskip

The implication $(4)\Ra(5)$ is trivial. To prove that $(5)\Ra(4)$, assume that the set $X^\sharp$ is infinite and hence contains a sequence $(p_n)_{n\in\w}$ of pairwise distinct ultrafilters. Since $X^\sharp$ is a closed subspace of the compact Hausdorff space $\beta X$, we can replace $(p_n)_{n\in\w}$ by a suitable subsequence and assume that the subspace $\{p_n\}_{n\in\w}$ is discrete in $X^{\sharp}$.  Fix any free ultrafilter $\mathcal U$ on the set $\w$ and consider the ultrafilter $p=\lim_{n\to\mathcal U}p_n$, consisting of the sets $\bigcup_{n\in U}P_n$ where $U\in\U$ and $P_n\in p_n$ for all $n\in U$. Since the set $X^\sharp$ is closed in $\beta X$, the ultrafilter $p$ belongs to the set $X^\sharp$, being an accumulation point of the set $\{p_n:n\in\w\}$ in $\beta X$. We claim that the ultrafilter $p$ is not countably complete. Using the discreteness of the subspace $\{p_n\}_{n\in\w}$ in $X^\sharp$, in each ultrafilter $p_n$ we can choose a set $P_n$ so that the family $(P_n)_{n\in\w}$ is disjoint. Now observe that for every $n\in\w$ the set $Q_n=\bigcup_{m\ge n}P_m$ belongs to the ultrafilter $p$ but $\bigcap_{n\in\w}Q_n=\emptyset$, witnessing that $p$ is not countably complete. 
\end{proof}

%It is easy to see that a ballean $X$ is discrete iff every function $f: X\to \mathbb{R}$is slowly oscillating iff every  function $f: X\to \{0, 1\}$  is slowly oscillating.

%A discrete ballean $X$ is  emu-bounded iff $X$ is bo-bounded iff  $X$ is so-bounded iff $\mathcal{B}_{X}^{\sharp} $
% is finite and each ultrafilter from $\mathcal{B}_{X}^{\sharp} $
 %  is countably compete.

A discrete ballean $X$ is called {\em ultradiscrete} if the family $\{X\setminus B:B\in\mathcal B_X\}$ is an ultrafilter on $X$. Proposition~\ref{p3.5} implies the following characterization.

\begin{corollary}\label{c3.6} For an ultradiscrete ballean $X$,  the  following  statements are  equivalent
\begin{enumerate}
\item $X$  is emu-bounded;
\item $X$  is bo-bounded;
\item  $X$  is so-bounded;
\item the bornology $\mathcal B_X$ of $X$ is $\sigma$-additive.
\end{enumerate}
\end{corollary}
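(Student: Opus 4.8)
The plan is to reduce everything to Proposition~\ref{p3.5} by identifying the set $X^\sharp$ explicitly. Since $X$ is ultradiscrete, the family $q:=\{X\setminus B:B\in\mathcal B_X\}$ is by definition an ultrafilter on $X$. Because $X$ is discrete, it is unbounded, so for every bounded $B$ the complement $X\setminus B$ is unbounded (otherwise $X=B\cup(X\setminus B)$ would be bounded). Hence every member of $q$ is unbounded, which means $q\in X^\sharp$.

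Next I would show that $q$ is the \emph{only} element of $X^\sharp$, so that in particular $X^\sharp$ is finite. Indeed, take any $p\in X^\sharp$ and any $P\in p$. By the definition of $X^\sharp$ the set $P$ is unbounded; since the ultradiscreteness of $X$ forces every subset of $X$ to be either bounded or cobounded, the unbounded set $P$ must be cobounded, i.e. $X\setminus P\in\mathcal B_X$ and thus $P\in q$. Therefore $p\subseteq q$, and as both are ultrafilters we get $p=q$. Consequently $X^\sharp=\{q\}$.

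With $X^\sharp=\{q\}$ the finiteness clause of condition~(4) of Proposition~\ref{p3.5} is automatic, and conditions (4) and (5) both collapse to the single requirement that the ultrafilter $q$ be countably complete. Hence, by Proposition~\ref{p3.5}, each of the statements (1)--(3) of the corollary is equivalent to the countable completeness of $q$. It thus remains only to verify that $q$ is countably complete if and only if the bornology $\mathcal B_X$ is $\sigma$-additive.

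This last equivalence is a straightforward De Morgan duality. A countable family in $q$ has the form $\{X\setminus B_n:n\in\w\}$ with each $B_n\in\mathcal B_X$, and its intersection equals $X\setminus\bigcup_{n\in\w}B_n$; this intersection lies in $q$ precisely when $\bigcup_{n\in\w}B_n$ is bounded. Thus $q$ is closed under countable intersections exactly when $\mathcal B_X$ is closed under countable unions, i.e. $q$ is countably complete iff $\mathcal B_X$ is $\sigma$-additive. I do not anticipate a genuine obstacle here: the only point requiring care is the identification $X^\sharp=\{q\}$, where one must invoke both the unboundedness of $X$ (to place $q$ in $X^\sharp$) and the maximality of the bornology coming from ultradiscreteness (to show no other ultrafilter qualifies).
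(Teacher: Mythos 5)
Your proof is correct and follows exactly the route the paper intends: the paper derives Corollary~\ref{c3.6} directly from Proposition~\ref{p3.5}, and your argument simply fills in the details of that reduction by showing $X^\sharp=\{q\}$ for the ultrafilter $q=\{X\setminus B:B\in\mathcal B_X\}$ and translating countable completeness of $q$ into $\sigma$-additivity of $\mathcal B_X$ via De Morgan duality. All the steps (unboundedness of complements of bounded sets, the bounded-or-cobounded dichotomy from the ultrafilter property, and maximality forcing $p=q$) are sound.
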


\section{Products}

In this section we discuss the problem of preservation of various notions of boundedness by products of balleans.
%вставка 3

\begin{theorem} The product $X\times  Y$  of two mu-bounded  spaces $X, Y$  is mu-bounded.
\end{theorem}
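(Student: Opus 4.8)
The plan is to reduce the two-dimensional problem to the one-dimensional mu-boundedness of the factors by ``integrating out'' one coordinate with a supremum. Recall that the product ballean $X\times Y$ carries the ball structure $\{E\times F:E\in\E_X,\ F\in\E_Y\}$, where $(E\times F)[(x,y)]=E[x]\times F[y]$. Fix a macro-uniform function $f:X\times Y\to\IR$; the goal is to prove that $f$ is bounded. The point to keep in mind is that mu-boundedness of each slice does not by itself suffice, since the slice bounds could grow without control; one must instead manufacture a single macro-uniform function on $X$ that dominates $f$.

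First I would check that every vertical slice is macro-uniform. Fix $x\in X$ and set $f^x:Y\to\IR$, $f^x(y)=f(x,y)$. Given $F\in\E_Y$, choose any $E\in\E_X$ and use the macro-uniformity of $f$ to get $C:=\sup_{(x',y')}\diam f(E[x']\times F[y'])<\infty$; since $x\in E[x]$ (as $E\supseteq\Delta_X$), we have $f^x(F[y])\subseteq f(E[x]\times F[y])$, whence $\diam f^x(F[y])\le C$ for every $y$. Thus $f^x$ is macro-uniform, and by mu-boundedness of $Y$ it is bounded. Consequently the function $g:X\to\IR$, $g(x):=\sup_{y\in Y}f(x,y)$, is well defined and real-valued.

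The crux is to show that $g$ is macro-uniform on $X$. Given $E\in\E_X$, pick any $F\in\E_Y$ and put $C:=\sup_{(x',y')}\diam f(E[x']\times F[y'])<\infty$. For any $x$, any $x_1,x_2\in E[x]$, and any $y\in Y$, both points $(x_1,y)$ and $(x_2,y)$ lie in the single ball $E[x]\times F[y]=(E\times F)[(x,y)]$ (here $x_1,x_2\in E[x]$ and $y\in F[y]$), so $|f(x_1,y)-f(x_2,y)|\le\diam f(E[x]\times F[y])\le C$. Taking the supremum over $y$ and invoking the elementary inequality $|\sup_y f(x_1,y)-\sup_y f(x_2,y)|\le\sup_y|f(x_1,y)-f(x_2,y)|$ (valid because both suprema are finite), we obtain $|g(x_1)-g(x_2)|\le C$. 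Hence $\diam g(E[x])\le C$ for all $x$, so $g$ is macro-uniform.

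By mu-boundedness of $X$ the function $g$ is bounded, say $g(x)\le M$ for all $x$; then $f(x,y)\le g(x)\le M$ everywhere, so $f$ is bounded above. Applying the same argument to the macro-uniform function $-f$ bounds $f$ from below, and therefore $f$ is bounded, so $X\times Y$ is mu-bounded. I expect the only genuinely delicate step to be the macro-uniformity of $g$: a bound on $\diam g(E[x])$ requires a comparison of $f(x_1,y)$ and $f(x_2,y)$ that is uniform in $y$, and this is supplied precisely by the observation that $(x_1,y)$ and $(x_2,y)$ sit in a common product ball once $x_1,x_2\in E[x]$.
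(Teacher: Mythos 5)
Your proof is correct and follows essentially the same route as the paper: show each slice $f_x$ is macro-uniform on $Y$ (hence bounded), then show the supremum function on $X$ is macro-uniform (hence bounded by mu-boundedness of $X$). The only cosmetic difference is that the paper takes $s(x)=\sup_{y\in Y}|f(x,y)|$ in one step, while you use $\sup_{y\in Y}f(x,y)$ and handle the lower bound via $-f$; your write-up also supplies the details the paper leaves as ``easy to check.''
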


\begin{proof} Let $f: X \times Y\to  \mathbb{R}$  be a mu-function.
For each $x\in X$, the function $f _{x}  :  Y\to  \mathbb{R}$,  defined as $f _{x} (y)= f (x,y)$,  is a mu-function,  so $f_{x}$  is bounded.
It is easy to check that the function $s: X\to  \mathbb{R}$, $s:x\mapsto  \sup  \{|f _{x} (y)|:  y\in Y\}$ is macro-uniform and hence bounded (as $X$ is mu-bounded).
The the function $f$ is bounded as well.
\end{proof}

\begin{theorem}\label{t4.2} Let $X,Y$  be balleans and $Y$ is either bounded  or locally finite. If $X, Y$  are either  emu-bounded or bo-bounded or so-bounded, then  the product  $X\times Y$  has the same property.
\end{theorem}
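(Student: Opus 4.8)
The plan is to prove all three implications (for emu-, bo- and so-boundedness) by a single uniform argument, splitting into two cases according to $Y$. If $Y$ is bounded, then $X\times Y$ is coarsely equivalent to $X$: the slice embedding $x\mapsto(x,y_0)$ is an asymorphism onto the large subballean $X\times\{y_0\}$, largeness holding because $Y=F_0[y_0]$ for some $F_0\in\E_Y$. Since the three properties are stable under coarse equivalence and hold for $X$, they hold for $X\times Y$. As a bounded locally finite ballean is finite (its single ball is finite), I may henceforth assume that $Y$ is locally finite and unbounded, and let $P$ denote whichever of the three properties is under consideration.

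So let $f:X\times Y\to\IR$ have property $P$; I must find a bounded $W\subseteq X\times Y$ off which $f$ is bounded. A subset of $X\times Y$ is bounded iff both its projections are, so, $Y$ being locally finite, every bounded set has finite $Y$-projection. The device driving the proof is $s:X\to\IR$, $s(x):=\sup_{y\in Y}|f(x,y)|$. To see that $s$ is finite-valued, fix $x$ and note that the slice $g_x(y):=f(x,y)$ has property $P$ on $Y$: applying $P$ of $f$ to the product entourage $E_0\otimes F$, where $(E_0\otimes F)[(x,y)]=E_0[x]\times F[y]$ for a fixed $E_0\in\E_X$, and using $\{x\}\times F[y]\subseteq(E_0\otimes F)[(x,y)]$, transfers the required control of $\diam f$ to $\diam g_x(F[y])$ off the (bounded, hence finite) $Y$-projection of the exceptional set. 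Since $Y$ is locally finite and $P$-bounded, every function on $Y$ with property $P$ is bounded, because the exceptional bounded set is finite and so contributes only finitely many values. Thus $g_x$ is bounded and $s(x)<\infty$.

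Next I verify that $s$ itself has property $P$ on $X$. For $x',x''\in E[x]$ and any $y$ one has $(x',y),(x'',y)\in(E\otimes F_0)[(x,y)]$, whence $|f(x',y)-f(x'',y)|\le\diam f((E\otimes F_0)[(x,y)])$; taking the supremum over $y$ gives $\diam s(E[x])\le\sup_{y}\diam f((E\otimes F_0)[(x,y)])$. Feeding $E\otimes F_0$ into property $P$ of $f$ bounds the right-hand side off the $X$-projection of a bounded set, the controlling quantity being the global constant in the bo case, an arbitrary $\e$ in the so case, and a finite constant in the emu case, exactly as the definition of $P$ demands; hence $s$ has property $P$. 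As $X$ is $P$-bounded, there is a bounded $B_1\subseteq X$ with $s$, and therefore $f$, bounded on $(X\setminus B_1)\times Y$. Finally, since $B_1$ is bounded in $X$, the subballean $B_1\times Y$ is coarsely equivalent to $Y$ (again via a slice) and hence $P$-bounded, while $f{\restriction}(B_1\times Y)$ retains property $P$; this produces a set $W$ bounded in $B_1\times Y$, thus in $X\times Y$, off which $f$ is bounded. Because $(X\times Y)\setminus W=((X\setminus B_1)\times Y)\cup((B_1\times Y)\setminus W)$, the function $f$ is bounded off the bounded set $W$, as required.

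The main obstacle is the asymmetry that forces this two-step structure. Local finiteness of $Y$ makes every bounded set meet only finitely many slices $X\times\{y\}$, so a single bounded exceptional set must already leave $f$ bounded on cofinitely many full slices; this is precisely what the passage through $s$ delivers. One cannot symmetrize the roles of $X$ and $Y$, since $X$ need not be locally finite and the analogous supremum over $X$ may be infinite, which is why the residual strip $B_1\times Y$ must instead be absorbed through its coarse equivalence with $Y$. The crux is therefore the finiteness of $s$, which hinges exactly on the local finiteness of $Y$.
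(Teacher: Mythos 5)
Your argument is correct, and its engine coincides with the paper's: in the locally finite case both proofs introduce the sup-function $s(x)=\sup_{y\in Y}|f(x,y)|$, check that $s$ inherits the property under consideration from $f$ (you make explicit, via the product entourage $E\otimes F_0$, the estimate $\diam s(E[x])\le\sup_{y}\diam f\bigl((E\otimes F_0)[(x,y)]\bigr)$ which the paper leaves as a bare assertion), use the boundedness property of $X$ to get $f$ bounded on $(X\setminus B_1)\times Y$, and then dispose of the residual strip $B_1\times Y$. The genuine difference is in how the bounded-factor situations are dispatched. The paper treats them directly: when $Y$ is bounded it fixes $y_0\in Y$, applies the property of $X$ to the slice function $f_{y_0}$, and enlarges the exceptional set so that $\sup_{x\notin B}\diam f(\{x\}\times Y)$ is controlled, which is immediate from the definition of the property since the whole of $Y$ lies in one ball; the residual strip $B\times Y$ is then handled by citing this same case with the roles of the factors swapped. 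You instead invoke coarse equivalence ($X\times Y$ coarsely equivalent to $X$ when $Y$ is bounded, and $B_1\times Y$ to $Y$) together with the paper's statement that all these boundedness notions are coarse invariants. This is more conceptual and avoids repeating the slice computation, but note that the paper only asserts coarse invariance (``It can be shown that\dots'') without proof, so your treatment of both bounded-factor steps rests on a black box whose verification is roughly as long as the paper's direct computation; the paper's proof is self-contained in this respect. The standard facts you use silently (a bounded subset of the product has bounded projections; a subset of a subballean that is bounded in the ambient ballean is bounded in the subballean; a bounded locally finite ballean is finite) are all correct and are invoked at the same level of informality by the paper itself, so they do not constitute gaps.
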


\begin{proof}  We prove only the third statement.  Assume that the balleans  $X,Y$   are so-bounded. Take any  so-function $f: X \times Y\to  \mathbb{R}$. Assume that  $Y$  is bounded, fix  any $y_{0} \in Y$ and consider the so-function $f_{y_0}:X\to\IR$, $f_{y_0}:x\mapsto f(x,y_0)$. Since $X$ is so-bounded, there exists a bounded set $B\subset X$  such that $f_{y_0}$ is bounded on $X\setminus B$.
Since $Y$ is bounded and $f$ is slowly oscillating, we can replace $B$ by a larger bounded set, and assume that the number $c:=\sup_{x\in X\setminus B}\diam f(\{x\}\times Y)$ is finite. Then $f$  is bounded on $(X\times  Y) \setminus (B \times Y)$.
\smallskip

Now assume that $Y$  is locally finite. In this case, for every $x\in X$ the so-function $f_{x}:  Y \to\mathbb{R}$, $f_x:y\mapsto f(x,y)$, is  bounded.
 Then the function $s:X\to\IR$, $s:x\mapsto \sup_{y\in Y}|f_{x}(y)|$,  is  well-defined and slowly oscillating. Since $X$ is so-bounded, there is $B\in \mathcal{B}_{X}$  such $s$ is  bounded on $X\setminus  B$.
Then $f$ is bounded on  $(X\times Y ) \setminus  (B\times  Y)$.
By first case, there   is a bounded subset $C$ of $Y$ such $f$  is bounded on  $(B\times Y)\setminus  (B\times C)$.
\end{proof}

% кінець вставки 3

\begin{question}\label{q4.3}  Is the product of any two emu-bounded balleans  emu-bounded?
\end{question}

\begin{question}  Is the product of any two
    $bo$-bounded balleans   $bo$-bounded?
\end{question}

\begin{question}\label{q4.5}  Is the product of any two    $so$-bounded balleans    $so$-bounded?
\end{question}

For more results on products of balleans, see \cite{BPnorm}.
Now we give some partial answers to Question~\ref{q4.3}--\ref{q4.5}.

For any bornology $\mathcal B$ on a set $X\notin\mathcal B$ consider the cardinal characteristics:
$$
\begin{aligned}
\add(\mathcal B)&=\min\{|\mathcal A|:\mathcal A\subset\mathcal B,\;\;\textstyle{\bigcup\mathcal A\notin\mathcal B}\},\\
\non(\mathcal B)&=\min\{|A|:A\subset X,\;\;A\notin\mathcal B\},\\
\cov(\mathcal B)&=\min\{|\mathcal A|:\mathcal A\subset\mathcal B,\;\;\textstyle{\bigcup\mathcal A=X}\},\\
\cof(\mathcal B)&=\min\{|\mathcal A|:\mathcal A\subset\mathcal B,\;\;\forall B\in\mathcal B\;\exists A\in\mathcal A\;\;(B\subset A)\}
\end{aligned}
$$
It is known (and easy to see) that $$\add(\mathcal B)\le\min\{\cov(\mathcal B),\non(\mathcal B)\}\le\max\{\cov(\mathcal B),\non(\mathcal B)\}\le\cof(\mathcal B).$$

\begin{proposition}\label{p4.6} Let  $X,Y$ be two balleans with $\cof(\mathcal B_X)<\non(\mathcal B_Y)$. If $X$ and $Y$ are emu-bounded (resp. bo-bounded or so-bounded), then so is the product $X \times Y$.
\end{proposition}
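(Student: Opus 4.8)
The plan is to work with the product ballean $X\times Y$, whose entourages are generated by the products $E\otimes F$ (with $(E\otimes F)[(x,y)]=E[x]\times F[y]$) and whose bornology $\mathcal B_{X\times Y}$ has as a base the ``rectangles'' $B\times C$ with $B\in\mathcal B_X$, $C\in\mathcal B_Y$. I treat the so-bounded case in detail; the bo-bounded and emu-bounded cases are parallel. So let $f:X\times Y\to\IR$ be slowly oscillating. The key reduction is that it suffices to find a bounded set $B_X\in\mathcal B_X$ with $f$ bounded on $(X\setminus B_X)\times Y$: indeed, since $B_X$ is bounded, the subballean $B_X\times Y$ is coarsely equivalent to $Y$ and hence so-bounded, so applying so-boundedness of $Y$ to the (slowly oscillating) restriction $f{\restriction}(B_X\times Y)$ yields a bounded $C\in\mathcal B_Y$ with $f$ bounded on $B_X\times(Y\setminus C)$. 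As the complement of the bounded rectangle $B_X\times C$ equals $\big((X\setminus B_X)\times Y\big)\cup\big(B_X\times(Y\setminus C)\big)$, the two boundedness statements combine to show $f$ is bounded off $B_X\times C$.

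To produce $B_X$, I would first define $g:Y\to\IR$ by $g(y)=\inf_{D\in\mathcal B_X}\sup_{x\in X\setminus D}|f(x,y)|$, the essential $\limsup$ of $|f(\cdot,y)|$ along the bornology of $X$. Since $X$ is so-bounded and each slice $f^y:x\mapsto f(x,y)$ is slowly oscillating on $X$, the function $f^y$ is bounded off a bounded subset of $X$, so $g(y)$ is finite for every $y$. Using slow oscillation of $f$ in the $Y$-direction (apply it to the entourage $\Delta_X\otimes F$: for $y$ outside a bounded set and $y'\in F[y]$ one obtains $|f(x,y)-f(x,y')|<\e$ for \emph{all} $x$), I would verify that $g$ is slowly oscillating on $Y$. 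So-boundedness of $Y$ then provides a bounded $C_0\in\mathcal B_Y$ and $N\in\w$ with $g(y)\le N$ for all $y\in Y\setminus C_0$. Fixing a cofinal family $\{D_\alpha:\alpha<\kappa\}$ in $\mathcal B_X$ with $\kappa=\cof(\mathcal B_X)$, this means $Y\setminus C_0\subseteq\bigcup_{\alpha<\kappa}Y_\alpha$, where $Y_\alpha:=\{y\in Y:|f(x,y)|\le N+1\text{ for all }x\in X\setminus D_\alpha\}$; because $\mathcal B_X$ is directed, the family $\{Y_\alpha\}_{\alpha<\kappa}$ is up-directed by inclusion, and crucially the threshold $N+1$ is now common to all $\alpha$.

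The hard part will be the final uniformization: extracting a single index $\alpha^*<\kappa$ for which $Y_{\alpha^*}$ is co-bounded, i.e. $(Y\setminus C_0)\setminus Y_{\alpha^*}\in\mathcal B_Y$. This is exactly where the hypothesis $\cof(\mathcal B_X)<\non(\mathcal B_Y)$ is used: the complements $Q_\alpha:=(Y\setminus C_0)\setminus Y_\alpha$ form a down-directed family, indexed by $\kappa$, with empty intersection, and the inequality $\kappa<\non(\mathcal B_Y)$ guarantees that every subset of $Y$ of size $\le\kappa$ is bounded. I expect this step to require a careful diagonal argument rather than a one-line pigeonhole, since the naive ``pick one point from each $Q_\alpha$'' does not by itself contradict unboundedness; indeed the purely abstract statement ``a $\kappa$-indexed up-directed cover of $Y$ with $\kappa<\non(\mathcal B_Y)$ has a co-bounded member'' is \emph{false}, so the argument must exploit both the directedness and the fact that the $Y_\alpha$ arise from the slowly oscillating function $g$. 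This is the genuine obstacle of the proof.

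Once such $\alpha^*$ is found I set $B_X:=D_{\alpha^*}$ and obtain $f$ bounded on $(X\setminus B_X)\times(Y\setminus C_0)$; to upgrade this to all of $(X\setminus B_X)\times Y$ I would handle the remaining strip $(X\setminus B_X)\times C_0$ by the same coarse-equivalence device, now using that $X\times C_0$ is coarsely equivalent to $X$ (as $C_0$ is bounded) and hence so-bounded, which yields a bounded $B_X''$ with $f$ bounded on $(X\setminus B_X'')\times C_0$; replacing $B_X$ by $B_X\cup B_X''$ gives $f$ bounded on all of $(X\setminus B_X)\times Y$, completing the reduction. Finally, the bo-bounded and emu-bounded cases follow the same scheme: one replaces ``$g\le N$ off $C_0$'' by the fixed constant $C$ from the definition of boundedly oscillating, respectively by the per-entourage finite bound from the definition of eventually macro-uniform, and repeats the essential-$\limsup$ construction and the $\cof$/$\non$ extraction verbatim.
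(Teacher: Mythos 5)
Your overall scheme is genuinely close to the paper's: the reduction to finding a bounded $B_X$ with $f$ bounded on $(X\setminus B_X)\times Y$, the treatment of the remaining strip via Theorem~\ref{t4.2} (using boundedness of $B_X$, resp.\ of $C_0$), the use of a cofinal family $\{D_\alpha\}_{\alpha<\kappa}$ in $\mathcal B_X$ with $\kappa=\cof(\mathcal B_X)$, and the intention to exploit $\kappa<\non(\mathcal B_Y)$ through a pigeonhole are all there, and your auxiliary function $g(y)=\inf_{D\in\mathcal B_X}\sup_{x\in X\setminus D}|f(x,y)|$ is indeed finite-valued and slowly oscillating by the argument you sketch. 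But the proof is not complete: you stop precisely at the step where the hypothesis $\cof(\mathcal B_X)<\non(\mathcal B_Y)$ must do its work, namely extracting one index $\alpha^*$ whose $Y_{\alpha^*}$ is co-bounded, and you explicitly leave it unresolved (``this is the genuine obstacle of the proof''). Since this is the only place the cardinal hypothesis enters, what is missing is the heart of the proposition, not a routine verification; your (correct) observation that the purely abstract statement about $\kappa$-indexed up-directed covers is false makes the omission fatal rather than cosmetic.

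The missing idea --- which is exactly how the paper closes the same step --- is to tie all the ``bad'' points to a single reference point by a \emph{single} entourage and then invoke the oscillation hypothesis once for that entourage. Concretely: if $Q_\alpha:=(Y\setminus C_0)\setminus Y_\alpha\neq\emptyset$ for every $\alpha<\kappa$, pick $y_\alpha\in Q_\alpha$; the set $V=\{y_\alpha:\alpha<\kappa\}$ has cardinality at most $\kappa<\non(\mathcal B_Y)$, hence is bounded, hence $V\subset F[y_0]$ for one entourage $F\in\mathcal E_Y$ and a fixed $y_0\in Y\setminus C_0$ (bounded sets in a ballean lie in a ball around any prescribed point; the paper uses this same fact). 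Applying slow oscillation of $f$ to the entourage $\Delta_X\otimes F$ with $\e=\tfrac12$ gives a bounded $B_1\times C_1$ such that $|f(x,y_\alpha)-f(x,y_0)|<\tfrac12$ for all $\alpha$ and all $x\notin B_1$; combining with $\sup_{x\notin D_0}|f(x,y_0)|<N+\tfrac12$ for some $D_0\in\mathcal B_X$ (as $g(y_0)\le N$) yields $|f(x,y_\alpha)|<N+1$ for all $\alpha$ and all $x\notin B_1\cup D_0$. Choosing $\alpha^*$ with $D_{\alpha^*}\supset B_1\cup D_0$ forces $y_{\alpha^*}\in Y_{\alpha^*}$, contradicting $y_{\alpha^*}\in Q_{\alpha^*}$; so in fact some $Y_\alpha$ contains all of $Y\setminus C_0$. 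The paper builds this absorption property into the cover from the start: its cover of $Y$ consists of the sets $U_{B,m}=\bigcup\{E[y_0]:B_E=B,\ c_E\le m\}$ indexed by pairs $(B,m)\in\mathcal B\times\IN$, so that every ball $E[y_0]$ --- and hence every bounded subset of $Y$ --- lies inside a single member of the cover; this is exactly the feature your abstract counterexample lacks and the feature the oscillation hypothesis restores. With this step inserted (and its routine emu/bo analogues, where $\e$ is replaced by the relevant finite constants), your argument becomes a correct proof along essentially the paper's lines.
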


\begin{proof} We shall present a proof only for the case of emu-boundedness. The other two cases can be considered by analogy. Assume that the balleans $X,Y$ are emu-bounded. If the ballean $X$ is bounded, then the product $X\times Y$ is emu-bounded by Theorem~\ref{t4.2}. So we assume that $X$ is unbounded. By the definition of the cardinal $\kappa=\cof(\mathcal B_X)$, the bornology $\mathcal B_X$ of $X$ has a base $\mathcal B\subset\mathcal B_X$ of cardinality $|\mathcal B|=\kappa$.

To prove that the product $X\times Y$ is emu-bounded, take any emu-function $f:X\times Y\to\IR$. Fix any point $y_0\in Y$. Since the function $f_{y_0}:X\to\IR$, $f_{y_0}:x\mapsto f(x,y_0)$, is emu-bounded, for any entourage  $E\in\mathcal{E}_{Y}$ there exists a bounded set $B_E\in\mathcal B$ such  that
$$c_E:=\sup_{x\in X\setminus B_E}\diam \  f(\{x\}\times E [y_{0}])$$ is finite. We claim that for some $B\in\mathcal B$ and $m\in\mathbb N$ the union $$U_{B,m}:={\textstyle\bigcup}\{E[y_0]:E\in\mathcal E_Y,\;B_E=B,\;c_E\le m\}$$ equals $Y$. To derive a contradiction, assume that for every $B\in \mathcal B$ and $m\in\mathbb N$ the union $U_{B,m}$ does not contain some point $y_{B,m}\in Y$. Since $\w\le\kappa<\non(\mathcal B_Y)$, the set $V=\{y_{B,m}:B\in\mathcal B,\;m\in\mathbb N\}$ is bounded in $Y$ and hence is contained in the ball $E[y_0]$ for some $E\in\mathcal E_Y$. Then for the set $B=B_E$ and any number $m\in\mathbb N$ with $m\ge c_E$, we get $y_{B,m}\in E[y_0]\subset U_{B,m}$, which contradicts the choice of $y_{B,m}$. This contradiction shows that $U_{B,m}=Y$ for some $B\in\mathcal B$ and $m\in\mathbb N$.

Then $\diam f(\{x\}\times Y)\le 2m$ for each $x\in X\setminus B$.
Since $X$ is emu-bounded, there exists $D\in\mathcal{B}_{X}$
 such that the number $c:=\sup_{x\in X\setminus D}|f(x,y_0)|$ is finite. Replacing $D$ by $B\cup D$, we can assume that $B\subset D$.
Then  $|f(x, y)|< c+2m$
for all
$(x,y)\in (X\setminus D)\times Y$.
By Theorem~\ref{t4.2}, the ballean $D\times Y$ is  emu-bounded, which implies that the function $f$ is bounded on the complement $D\times Y\setminus D\times C$ for some bounded set $C\in\mathcal B_Y$. Now we see that $f$ is bounded on $X\times Y\setminus D\times C$, witnessing that the ballean $X\times Y$ is emu-bounded.
\end{proof}

\begin{proposition} Let  $X,Y$ be two so-bounded balleans. If $\cov(\mathcal B_X)<\add(\mathcal B_Y)$, then the product $X \times Y$ is so-bounded.
\end{proposition}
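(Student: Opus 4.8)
The plan is to reduce the whole problem to producing a single bounded set $C\subset Y$ off which $f$ is already bounded on the entire slab $X\times(Y\setminus C)$, and then to dispose of the remaining slab $X\times C$ by Theorem~\ref{t4.2}. As in Proposition~\ref{p4.6}, if either factor is bounded the claim follows immediately from Theorem~\ref{t4.2}, so I would assume both $X$ and $Y$ unbounded (this is forced anyway, since $\cov(\mathcal B_X)$ and $\add(\mathcal B_Y)$ are defined only for unbounded balleans). Fix a slowly oscillating $f:X\times Y\to\IR$ and a base point $x_0\in X$. Since $f(x_0,\cdot):Y\to\IR$ is slowly oscillating and $Y$ is so-bounded, there are a bounded set $C_0\subset Y$ and a constant $c_0$ with $|f(x_0,y)|\le c_0$ for all $y\in Y\setminus C_0$; this anchors the values of $f$ along the slice $\{x_0\}\times Y$.

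The heart of the argument is to control the oscillation of $f$ in the $X$-direction \emph{uniformly in} $x$. Applying slow oscillation to an entourage $E\times F_0$ (with $E\in\mathcal E_X$ and any fixed $F_0\in\mathcal E_Y$) and $\varepsilon=1$ yields a bounded subset of the product contained in a product $D'\times C'$ of bounded sets. The key observation is that for every $y\in Y\setminus C'$ the pair $(x,y)$ lies outside $D'\times C'$ for \emph{all} $x$, so that $\diam f(E[x]\times\{y\})<1$ holds simultaneously for every $x\in X$. Specializing to $x=x_0$, this gives: for each $E\in\mathcal E_X$ there is a bounded $C_E\subset Y$ such that $|f(x,y)-f(x_0,y)|<1$ whenever $x\in E[x_0]$ and $y\in Y\setminus C_E$.

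Now I would invoke the hypothesis $\cov(\mathcal B_X)<\add(\mathcal B_Y)$. The balls $\{E[x_0]:E\in\mathcal E_X\}$ are bounded and cofinal in $\mathcal B_X$ — every bounded set is contained in such a ball, by composing entourages — so any cover of $X$ by $\cov(\mathcal B_X)$-many bounded sets refines to a cover $X=\bigcup_{i\in\kappa}E_i[x_0]$ with $\kappa=\cov(\mathcal B_X)$. Put $C:=C_0\cup\bigcup_{i\in\kappa}C_{E_i}$. As the union of the $\kappa<\add(\mathcal B_Y)$ bounded sets $C_{E_i}$ is again bounded, $C\in\mathcal B_Y$. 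For any $x\in X$ and $y\in Y\setminus C$, choosing $i$ with $x\in E_i[x_0]$ gives $|f(x,y)-f(x_0,y)|<1$ and $|f(x_0,y)|\le c_0$, hence $|f(x,y)|<c_0+1$. Thus $f$ is bounded on $X\times(Y\setminus C)$.

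Finally, since $C$ is bounded in $Y$, Theorem~\ref{t4.2} shows that $X\times C$ is so-bounded, so there is a bounded $D\subset X$ with $f$ bounded on $(X\setminus D)\times C$. Combining the two regions, $f$ is bounded on $(X\times Y)\setminus(D\times C)$, and $D\times C$ is bounded, which proves that $X\times Y$ is so-bounded. The step I expect to require the most care is the uniform-in-$x$ oscillation estimate of the second paragraph: it rests on slicing the product bounded set at a fixed $y\notin C'$, and it is precisely this uniformity that allows the $\cov(\mathcal B_X)$-many exceptional sets $C_{E_i}$ to be amalgamated, via $\add(\mathcal B_Y)$, into one bounded set $C$.
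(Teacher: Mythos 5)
Your proof is correct and follows essentially the same route as the paper's: cover $X$ by $\cov(\mathcal B_X)$-many balls $E_\alpha[x_0]$ centered at a fixed point, use slow oscillation (with $\varepsilon=1$ and a product-form bounded exceptional set) to get one bounded set $C_\alpha\subset Y$ per ball, amalgamate them into a single bounded $C$ via $\add(\mathcal B_Y)$, bound $f$ on $X\times(Y\setminus C)$ using the slice $f(x_0,\cdot)$, and dispose of the remaining strip with Theorem~\ref{t4.2}. If anything, your bookkeeping of the constant ($|f(x_0,y)|\le c_0$ rather than the paper's unjustified ``$<1$'') is slightly more careful than the original.
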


\begin{proof} 
 By the definition of the cardinal $\kappa=\cov(\mathcal B_X)$, there exists a subfamily $\{B_\alpha\}_{\alpha\in\kappa}\subset\mathcal B_X$ with $\bigcup_{\alpha\in\kappa}B_\alpha\in X$.

To show that the ballean $X\times Y$ is so-bounded, take any so-function $f:X\times Y\to\IR$. Fix any point $x_0\in X$ and for every $\alpha\in \kappa$ find an entourage $E_\alpha$ on $X$ such that $B_\alpha\subset E_\alpha[x_0]$.
Then $X=\bigcup_{\alpha\in \kappa}E_\alpha[x_0]$. 
 Since  $f$ is slowly oscillating,  there  exists
$C_{\alpha}\in   \mathcal{B}_{Y} $
  such  that  $\sup_{y\in Y\setminus C_\alpha}\diam \  f(E_{\alpha} [x_{0}]\times\{y\})\le 1.$  Since $\add(\mathcal B_Y)>\cov(\mathcal B_X)=\kappa$, the set $C= \bigcup_{\alpha<\kappa} C_{\alpha}$  is bounded  in $Y$.
If $y\in  Y\setminus  C$  then $\diam f(E_\alpha[x_0]\times\{y\})\le 1$ for every $\alpha\in\kappa$ and hence $\diam f(X\times\{y\})\le 2$.

Since $Y$ is so-bounded,  there exist a bounded set
 $D\in \mathcal{B}_{Y}$   such that $\sup_{y\in Y\setminus D}|f(x_0,y)|<1$. Replacing $D$ by $C\cup D$, we can assume that $C\subset D$. Then $|f(x,y)|<1+\diam f(X\times \{y\})\le 1+2=3$ for all
 $x\in X$ and  $y\in Y \setminus D$.
 Therefore, $f$ is bounded on the set $(X\times Y)\setminus (X\times D)$. 
 
By Theorem~\ref{t4.2}, $X\times D$
  is so-bounded so there exists a bounded set $B\subset X$ such that $f$ is bounded on the set $(X\times D)\setminus (B\times D)$. Summing up, we conclude that $f$ is bounded on the complement of the bounded set $B\times D$.
  \end{proof}

\begin{proposition} Let  $X,Y$ be two balleans such that $\cov(\mathcal B_X)<\add(\mathcal B_Y)$ and the bornology $\mathcal B_X$ is tall. If $X$ and $Y$ are emu-bounded (resp. bo-bounded), then so is the product $X \times Y$.
\end{proposition}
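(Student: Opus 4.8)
The plan is to lean on Theorem~\ref{t4.2} (a bounded factor times an emu- or bo-bounded factor stays emu- or bo-bounded) and on the emu-boundedness of the factor $X$ applied to suitable real functions on $X$; I treat the emu case in detail, the bo case being easier. If $X$ is bounded we are done by Theorem~\ref{t4.2}, so assume $X$ is unbounded and put $\kappa=\cov(\mathcal B_X)<\add(\mathcal B_Y)$. Fix $x_0\in X$ and cover $X=\bigcup_{\alpha<\kappa}E_\alpha[x_0]$ by $\kappa$ balls. Each ball is bounded, so Theorem~\ref{t4.2} makes each strip $E_\alpha[x_0]\times Y$ emu-bounded; applying this to the restriction of the emu-function $f\colon X\times Y\to\IR$ yields a bounded $D_\alpha\in\mathcal B_Y$ with $f$ bounded on $E_\alpha[x_0]\times(Y\setminus D_\alpha)$. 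Since $\kappa<\add(\mathcal B_Y)$, the union $D:=\bigcup_{\alpha<\kappa}D_\alpha$ is bounded in $Y$.

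The heart of the argument is a \emph{peak lemma}: for every bounded $D'\in\mathcal B_Y$ there are a bounded $B_{D'}\in\mathcal B_X$ and a finite $K_{D'}$ with $\sup_{y\in D'}|f(x,y)|\le K_{D'}$ for all $x\notin B_{D'}$. To prove it I pick an entourage $F$ on $Y$ and $y_*$ with $D'\subseteq F[y_*]$; emu-ness of $f$ for $\Delta_X\times F$ bounds $\diam f(\{x\}\times F[y_*])$ off a bounded set, so $|f(x,y)|\le|f(x,y_*)|+c$ on $D'$ for $x$ off a bounded set, and the single-fibre function $x\mapsto f(x,y_*)$ is emu on $X$, hence bounded off a bounded set since $X$ is emu-bounded. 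Using the peak lemma for the \emph{fixed} $D$ together with the covering bounds, I then show that the ``bad'' set $U=\{x\in X:f(\{x\}\times Y)\text{ is unbounded}\}$ is contained in $B_D$: for $x\in E_\alpha[x_0]\setminus B_D$ the fibre $f(x,\cdot)$ is bounded on $Y\setminus D$ (covering) and on $D$ (peak lemma), hence bounded. \emph{This step — upgrading non-uniform per-ball bounds to uniform tameness of all fibres outside a bounded set — is the main obstacle}, and it is exactly where $\cov(\mathcal B_X)<\add(\mathcal B_Y)$ interacts with the emu-boundedness of $X$. Tallness of $\mathcal B_X$ can also be invoked here, as b-boundedness forces the bornologous ``eventual supremum'' $x\mapsto\inf_{D'\in\mathcal B_Y}\sup_{y\notin D'}|f(x,y)|$ to be bounded.

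With $U\subseteq B_D$ the oscillation $\theta(x):=\diam f(\{x\}\times Y)$ is finite off the bounded set $B_D$. I would check that $\theta$ is emu on $X$: for $x',x''\in E[x]$ one has $|\theta(x')-\theta(x'')|\le 2\sup_{y}|f(x',y)-f(x'',y)|$, and this supremum splits into the part $y\notin R_E$ (bounded by emu-ness of $f$ for $E\times\Delta_Y$) and the part $y\in R_E$ (bounded by $2K_{R_E}$ via the peak lemma for $D'=R_E$, valid once $x',x''$ lie outside a bounded set). Emu-boundedness of $X$ then produces a bounded $B_1$ and a constant $K$ with $\diam f(\{x\}\times Y)\le K$ for all $x\notin B_1$, a uniform bound on the $Y$-oscillation. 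Fixing $y_0$, the emu function $x\mapsto f(x,y_0)$ is bounded off a bounded $B_2$, so $|f(x,y)|\le|f(x,y_0)|+K$ is bounded on $(X\setminus(B_1\cup B_2))\times Y$; the remaining strip $(B_1\cup B_2)\times Y$ is emu-bounded by Theorem~\ref{t4.2}, whence $f$ is bounded off a bounded subset of $X\times Y$, as required.

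For the bo case the same scheme is lighter. Boundedly oscillating functions carry a single constant $C$, so the per-ball oscillation bounds are uniformly $C$ and the covering directly gives $\diam f(X\times\{y\})\le 2C$ for $y$ outside a bounded union $\bigcup_\alpha Q_\alpha$; combining this with the boundedness of the single fibre $f(x_0,\cdot)$ (bo-boundedness of $Y$) and Theorem~\ref{t4.2} finishes the proof without the $\theta$-machinery.
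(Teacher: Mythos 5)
Your proof is correct, and it follows a genuinely different route from the paper's --- in fact it proves slightly more. The paper's argument also starts by covering $X$ with $\kappa=\cov(\mathcal B_X)$ balls $E_\alpha[x_0]$ and extracting, for each $\alpha$, a bounded $C_\alpha\subset Y$ and a constant $c_\alpha$ controlling $\diam f(E_\alpha[x_0]\times\{y\})$ for $y\notin C_\alpha$; but then \emph{tallness} enters as a pigeonhole device: if no single $m$ satisfied $\bigcup\{E_\alpha[x_0]:c_\alpha\le m\}=X$, one picks witnesses $x_m$ outside these unions, uses tallness to find an infinite \emph{bounded} set of witnesses, traps it in one ball $E_\alpha[x_0]$, and gets a contradiction; the uniform $m$ then combines with additivity ($C=\bigcup_{c_\alpha\le m}C_\alpha$ bounded), the single column $f(x_0,\cdot)$, and Theorem~4.2 to finish. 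You replace this pigeonhole by a different uniformization: the peak lemma, plus the observation that the fibrewise oscillation $\theta(x)=\diam f(\{x\}\times Y)$ --- once known to be finite off a bounded set, which is exactly where you spend $\cov(\mathcal B_X)<\add(\mathcal B_Y)$ via the strips and Theorem~4.2 --- is itself an emu-function on $X$, so emu-boundedness of $X$ bounds it uniformly. This transplants the trick of the paper's Theorem~4.1 (applying mu-boundedness of $X$ to the sup-function) into the emu setting, where the exceptional bounded sets make it nontrivial; your peak lemma (using that every bounded $D'\subset Y$ sits in one ball $F[y_*]$, and that rectangular exceptional sets give control for \emph{all} $x$ once $y\notin R_E$) is precisely what makes the oscillation estimate work. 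What each approach buys: the paper's proof is shorter; yours never actually uses tallness, so it establishes the proposition with the tallness hypothesis deleted --- a formally stronger statement, worth recording, and for the bo-case the single constant $C$ makes even the direct covering argument tallness-free, as you note. Three small repairs: (i) $\Delta_X$ and $\Delta_Y$ need not be entourages of a ballean, so write $E_0\otimes F$ and $E\otimes F_0$ for arbitrary entourages $E_0\in\mathcal E_X$, $F_0\in\mathcal E_Y$ (nothing changes, since balls only shrink when you pass to $\{x\}\times F[y_*]$); (ii) your parenthetical invocation of tallness via the eventual-supremum function should simply be dropped --- it is unnecessary, and as stated it is unjustified, since that function need not be real-valued or bornologous; (iii) in the final assembly the exceptional set must also include $B_D$ (where $\theta$ was only conventionally defined), i.e.\ the good region is $(X\setminus(B_1\cup B_2\cup B_D))\times Y$ --- a bookkeeping point only.
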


\begin{proof} We shall provide a detail proof only for the case of emu-boundedness. The case of bo-boundedness can be considered by analogy. So assume that the balleans $X,Y$ are emu-bounded.
 By the definition of the cardinal $\kappa=\cov(\mathcal B_X)$, there exists a subfamily $\{B_\alpha\}_{\alpha\in\kappa}\subset\mathcal B_X$ with $\bigcup_{\alpha\in\kappa}B_\alpha\in X$.

To show that the ballean $X\times Y$ is emu-bounded, take any emu-function $f:X\times Y\to\IR$. Fix any point $x_0\in X$ and for every $\alpha\in \kappa$ find an entourage $E_\alpha$ on $X$ such that $B_\alpha\subset E_\alpha[x_0]$.
 Since  $f$ is eventually macro-uniform,  there  exists
$C_{\alpha}\in   \mathcal{B}_{Y} $
  such  that  $$c_\alpha:=\sup_{y\in Y\setminus C_\alpha}\diam \  f(E_{\alpha} [x_{0}]\times\{y\})$$is finite. For every $m\in\IN$ let $K_m=\{\alpha\in\kappa:c_\alpha\le m\}$. We claim that for some $m\in\IN$ the union $U_m=\bigcup_{\alpha\in K_m}E_\alpha[x_0]$ coincides with $X$. In the opposite case, for every $m\in\IN$ we can choose a point $x_m\in X\setminus U_m$. Since the bornology $\mathcal B_X$ is tall, there exists an increasing number sequence $(m_k)_{k\in\w}$ such that the set $\{x_{m_k}\}_{k\in\w}$ is bounded and hence is contained in $B_\alpha\subset E_\alpha[x_0]$ for some $\alpha\in\kappa$. Then for any $k\in\w$ with $m_k\ge c_\alpha$ we have $x_{m_k}\in E_\alpha[x_0]\subset U_{m_k}$, which contradicts the choice of $x_{m_k}$. This contradiction shows that $U_m=X$ for some $m\in\IN$.

Since $\add(\mathcal B_Y)>\cov(\mathcal B_X)=\kappa\ge|K_m|$, the set $C= \bigcup_{\alpha\in K_m} C_{\alpha}$  is bounded  in $Y$.
If $y\in  Y\setminus  C$  then $\diam f(E_\alpha[x_0]\times\{y\})\le c_\alpha\le m$ for every $\alpha\in K_m$ and hence $\diam f(X\times\{y\})=\diam(U_m\times\{y\})\le 2m$.

Since $Y$ is emu-bounded,  there exist a bounded set
  $D\in \mathcal{B}_{Y}$  and  such that the number 
  $$c:=\sup_{y\in Y\setminus D}|f(x_0,y)|$$ is finite. Replacing the set $D$ by $C\cup D$, we can assume that $C\subset D$. Then $|f(x,y)|<c+\diam f(X\times \{y\})<c+2m$ for all
 $x\in X$ and  $y\in Y \setminus D$. 
 Therefore, $f$ is bounded on the set $(X\times Y)\setminus (X\times D)$. 
 
By Theorem~\ref{t4.2}, the ballean $X\times D$
  is emu-bounded, so there exists a bounded set $B\subset X$ such that $f$ is bounded on the set $(X\times D)\setminus (B\times D)$. Summing up, we conclude that $f$ is bounded on $(X\times Y)\setminus(B\times D)$, witnessing that $X\times Y$ is emu-bounded.
  \end{proof}

\section{Group balleans}

A bornology $\mathcal B$ on a group $G$ is called a {\em group bornology} if for any sets $A,B\in\mathcal B$ the set $AB^{-1}=\{ab^{-1}:a\in A,\;b\in B\}$ belongs to $\mathcal B$. 

Each group bornology $\mathcal B$ on a group $G$ induces the coarse structure $\mathcal E_{\mathcal B}$, generated by the base $\{E_B:B\in\mathcal B\}$ consisting of the entourages $E_B=\{(x,y)\in G\times G:y\in \{x\}\cup Bx\}$ where $B\in\mathcal B$.

It is easy to see that for any infinite cardinal $\kappa$ of any group $G$ the family $$[G]^{<\kappa}:=\{B\subset G:|B|<\kappa\}$$ is a group bornology on $G$. It induces the coarse structure $\mathcal E_{[G]^{<\kappa}}$ on $G$. The ballean $(G,\mathcal E_{[G]^{<\kappa}})$ will be called the {\em $\kappa$-ballean} of $G$. The $\w$-ballean of $G$ is called the {\em finitary ballean} of $G$.  It should be mentioned that (in metic terms) the finitary balleans of finitely generated  groups $G$ have been intensively studied in
 Geometrical Group Theory,  see \cite[Chapter 4]{b17}. For properties of the $\kappa$-balleans of groups, see \cite{b21}, \cite{b18}.

\begin{theorem}\label{t5.1} For any infinite regular cardinal $\kappa$, the $\kappa$-ballean of any group $G$ of cardinality $|G|=\kappa$ is not so-bounded.
\end{theorem}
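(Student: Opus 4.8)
The plan is to prove the failure of so-boundedness directly, by constructing a single function $f\colon G\to\IR$ that is slowly oscillating yet unbounded on the complement $G\setminus B$ of every bounded set $B$ (recall that in the $\kappa$-ballean a set is bounded iff it has cardinality $<\kappa$); such an $f$ witnesses that $G$ is not so-bounded. The first step is to rewrite the slow-oscillation requirement in a pointwise group-theoretic form. Since the basic entourages are $E_S$ with $S\in[G]^{<\kappa}$ and $E_S[x]=\{x\}\cup Sx$, the triangle inequality gives $\{x:\diam f(E_S[x])\ge\e\}\subseteq\bigcup_{s\in S}\{x:|f(sx)-f(x)|\ge\e/2\}$. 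As $|S|<\kappa=\cf(\kappa)$ and the bounded sets are exactly those of cardinality $<\kappa$, a union of $<\kappa$ bounded sets is bounded; hence it suffices to arrange that for every single $b\in G$ and every $\e>0$ the set $\{x:|f(bx)-f(x)|\ge\e\}$ has cardinality $<\kappa$. Regularity of $\kappa$ is used here and, crucially, again below.

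Assume first that $\kappa$ is uncountable. Enumerating $G=\{g_\xi:\xi<\kappa\}$, I would form the continuous increasing chain of subgroups $G_\alpha:=\langle g_\xi:\xi<\alpha\rangle$; since $\kappa$ is uncountable, $|G_\alpha|\le|\alpha|+\w<\kappa$, so each $G_\alpha$ is bounded, while $\bigcup_{\alpha<\kappa}G_\alpha=G$. Let $\mathrm{lv}(x)$ denote the unique ordinal $\gamma$ with $x\in G_{\gamma+1}\setminus G_\gamma$. The key observation is that this rank is left-translation invariant off a bounded set: if $b\in G_{\alpha_0}$ and $\mathrm{lv}(x)=\gamma\ge\alpha_0$, then $b\in G_\gamma$ forces $bx\in G_\gamma\Leftrightarrow x\in G_\gamma$ and $bx\in G_{\gamma+1}$, whence $\mathrm{lv}(bx)=\mathrm{lv}(x)$; thus $\{x:\mathrm{lv}(bx)\ne\mathrm{lv}(x)\}\subseteq G_{\alpha_0}$ is bounded. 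Next I compose with a reparametrization $\phi\colon\kappa\to\IR$: choosing a partition of the cofinal, size-$\kappa$ set $\Gamma=\{\gamma:G_{\gamma+1}\ne G_\gamma\}$ into countably many cofinal pieces $\Gamma_n$ and setting $\phi(\gamma)=n$ for $\gamma\in\Gamma_n$, the function $f:=\phi\circ\mathrm{lv}$ attains arbitrarily large values cofinally in $\kappa$. Since every bounded set lies in some $G_\alpha$ by regularity, $f$ is unbounded on $G\setminus B$ for each bounded $B$; and since $f(bx)=f(x)$ off the bounded set $G_{\alpha_0}$, the pointwise condition from the first paragraph holds trivially, so $f$ is slowly oscillating.

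The remaining case $\kappa=\w$ is where the real difficulty lies, and I expect it to be the main obstacle: the subgroup chain is useless because a singly generated subgroup of an infinite group can be infinite, and in fact no unbounded integer-valued $f$ can satisfy the pointwise condition on a group such as $\IZ$ (invariance off a finite set under a single generator forces $f$ eventually constant). One therefore cannot hope for exact invariance off a finite set and must instead produce a properly growing norm together with a slowly growing reparametrization. Concretely I would enumerate $G=\{g_n:n\in\w\}$, put $S_n=\{e,g_0^{\pm1},\dots,g_{n-1}^{\pm1}\}$ and $F_n=S_n^{\,n}$, obtaining an increasing chain of finite sets with $\bigcup_nF_n=G$; the norm $\|x\|=\min\{n:x\in F_n\}$ then satisfies $|\,\|bx\|-\|x\|\,|\le c_b$ off a finite set for each $b$. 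Composing with a concave unbounded $\psi\colon\w\to\IR$ whose increments tend to $0$ (such as $\psi(n)=\log(1+n)$) converts the bounded translation defect into an arbitrarily small one while keeping $f=\psi\circ\|\cdot\|$ unbounded on cofinite sets; this yields the pointwise condition, and hence an unbounded slowly oscillating function, in the finitary case as well. In both cases the essential use of regularity is that a union of $<\kappa$ bounded sets stays bounded, which simultaneously legitimizes the pointwise reduction and guarantees that every bounded set is absorbed by some member of the chain.
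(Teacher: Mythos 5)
Your proof is correct, but it follows a genuinely different route from the paper's. The paper disposes of the theorem in two lines: since $\kappa$ is regular, the coarse structure $\mathcal E_{[G]^{<\kappa}}$ has a linearly ordered base (take the entourages $E_{B_\alpha}$ for the initial segments $B_\alpha=\{g_\xi:\xi<\alpha\}$, $\alpha<\kappa$, of an enumeration of $G$; cofinality of this chain in $[G]^{<\kappa}$ is exactly where regularity enters), and then it invokes Proposition 3.1 of Protasov's paper \emph{Normal ball structures} \cite{b19}, which yields that an unbounded ballean whose coarse structure has a linearly ordered base is never so-bounded. You instead construct the witnessing function by hand: your chains $(G_\alpha)_{\alpha<\kappa}$ and $(F_n)_{n\in\w}$ are precisely linearly ordered cofinal families of bounded sets, and your level/norm function composed with a reparametrization is the standard way to extract an unbounded slowly oscillating function from such a chain, so in effect you have reproved the needed instance of the cited proposition. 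Your reduction of slow oscillation to the pointwise condition on single left translations is sound and correctly uses regularity twice (a union of fewer than $\kappa$ bounded sets is bounded; every bounded set is absorbed by the chain); the only cosmetic repair is that the containment $\{x:\diam f(E_S[x])\ge\e\}\subseteq\bigcup_{s\in S}\{x:|f(sx)-f(x)|\ge\e/2\}$ can fail when suprema are not attained, so one should use $\e/4$ (or strict inequalities), which changes nothing. Both of your cases check out: for uncountable $\kappa$ the subgroup chain gives exact invariance of the level off a bounded set, and your observation that exact invariance is impossible for $\kappa=\w$ (e.g.\ on $\IZ$) is accurate, which is why the word-norm-plus-logarithm argument is genuinely needed there. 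As for what each approach buys: the paper's argument is shorter and rests on a general fact applicable to any unbounded ballean with a linearly ordered base, not only group balleans; yours is self-contained, makes the role of regularity completely explicit, and exploits the group structure (subgroup chains in the uncountable case) to get a particularly clean invariance argument.
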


\begin{proof} Since $\kappa$ is regular, the coarse structure $\mathcal{E} _{[G]^{<\kappa}}$ has a linearly ordered base. Applying Proposition 3.1 of \cite{b19}, we conclude that the $\kappa$-ballean $(G,\mathcal E_{[G]^{<\kappa}})$ is not so-bounded.
\end{proof}

Example  3.3  from \cite{FP}  shows  that the finitary ballean of any infinite free group $F(X)$ is  not so-bounded.

\begin{theorem}\label{t5.2} For any infinite cardinal $\kappa$ and set $X$ of cardinality $|X|\ge\kappa$ the $\kappa$-ballean of the free group $G=F(X)$ is not so-bounded.
\end{theorem}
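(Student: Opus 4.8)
The plan is to exhibit a single unbounded slowly oscillating function on the $\kappa$-ballean of $G=F(X)$; its existence immediately shows that this ballean is not so-bounded. The function simply reads off the \emph{last letter} of the reduced word. Since $|X|\ge\kappa\ge\w$, fix a sequence $(a_n)_{n\in\w}$ of pairwise distinct elements of $X$ and let $h:X\to\IR$ satisfy $h(a_n)=n$ and $h(a)=0$ for $a\in X\setminus\{a_n:n\in\w\}$. For $g\in G\setminus\{e\}$ with reduced word $g=x_1\cdots x_m$ (each $x_i\in X\cup X^{-1}$) put $f(g)=h(a)$, where $a\in X$ is the generator with $x_m\in\{a,a^{-1}\}$, and set $f(e)=0$.

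The crux is a cancellation lemma for the left multiplication governing the entourages $E_B[x]=\{x\}\cup Bx$: for all $b,x\in G$ the reduced word of $bx$ has the \emph{same last letter} as that of $x$, unless $x$ is cancelled entirely, i.e. unless $x^{-1}$ is a suffix of the reduced word of $b$. Indeed, reducing the concatenation $b\cdot x$ cancels only a suffix of $b$ against a prefix of $x$; as long as not all of $x$ is consumed, its final letter survives and no further cancellation can occur at the new junction. Hence, for a fixed $b$, the bad set $S_b:=\{x\in G: f(bx)\ne f(x)\}$ is contained in $\{e\}\cup\{(\text{suffix of }b)^{-1}\}$, a \emph{finite} set of size at most $|b|+1$.

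From this the two required properties follow. For slow oscillation, given $B\in\mathcal B_G=[G]^{<\kappa}$ and $\e>0$, the set $C:=\{e\}\cup\bigcup_{b\in B}S_b$ has cardinality $\le|B|\cdot\w<\kappa$ (finite when $\kappa=\w$), so it is bounded; and for every $x\in G\setminus C$ we have $f(bx)=f(x)$ for all $b\in B$, whence $f(E_B[x])=f(\{x\}\cup Bx)=\{f(x)\}$ and $\diam f(E_B[x])=0<\e$. For unboundedness off every bounded set, observe that for each $n\in\w$ the level set $\{g\in G:f(g)\ge n\}$ contains every reduced word ending in $a_n^{\pm1}$, and there are $|F(X)|=|X|\ge\kappa$ such words; thus $\{f\ge n\}$ is unbounded, so $f$ is unbounded on $G\setminus B$ for every bounded $B$. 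Therefore $f$ is slowly oscillating yet admits no bounded set off which it is bounded, and the $\kappa$-ballean of $F(X)$ is not so-bounded.

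The only delicate point — and the heart of the matter — is the cancellation lemma. One is tempted to fear that left multiplication by $b$ destroys the last letter of \emph{every} short word, which would inflate the bad set to size $|X|\ge\kappa$; the lemma shows instead that the last letter is spoiled only for the at most $|b|+1$ words $x$ absorbed completely into $b$. This is exactly where freeness of $G$ enters, and it is what keeps each $S_b$ finite, so that $\bigcup_{b\in B}S_b$ remains bounded. The hypothesis $|X|\ge\kappa$ is used only at the very end, to make the level sets of $f$ unbounded; note that for $\kappa=\w$ the same argument reproves Example 3.3 of \cite{FP} for infinitely generated free groups.
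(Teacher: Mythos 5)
Your proof is correct and follows essentially the same route as the paper's: both define $f$ by reading an unbounded function $h$ at the last letter of the reduced word, and both rest on the same cancellation lemma --- left multiplication in a free group can change the last letter only by absorbing the whole word. The only cosmetic differences are that the paper takes $h$ constant on the pieces of a partition $X=\bigcup_{n}X_n$ with $|X_n|=|X|$ and bounds the exceptional set by the subgroup $S$ generated by the letters occurring in $B$, whereas you use a single sequence of generators and the finite sets of inverted suffixes $S_b$; both exceptional sets have cardinality $<\kappa$, so the arguments are interchangeable.
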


\begin{proof}
We partition $X=\bigcup  _{n\in \mathbb{N} } X_{n} $ so that  $|X_{n}|= |X|$  for each $n\in \mathbb{N}$,  and  define  a  function $h: X\cup \{e\}\cup  X^{-1}\to\mathbb{R}$ by $h(e)=0$ and $h(x)= h(x^{-1})=n$ for any  $x\in X_{n}$, $n\in\IN$.
Then we define a mapping  $f: G \to\mathbb{R}$  assigning to each word $w\in G=F(X)$ the number $h(a)$ where $a$  is the last letter in the irreducible representation of $w$.
It is easy to see that $f$ is unbounded  the complement $G\setminus B$ of any set $B\in[G]^{<\kappa}$.

To show that $f$ is slowly oscillating,  we take an arbitrary subset  $B\in [G]^{<\kappa }$  and denote by  $S$  the subgroup  of  $G$  generated by all letters which
appear in the  (irreducible) words from  $B$.
If $w\in  G\setminus  S$  and  $y\in S$  then  $f(y w)= f(w)$,  so $\diam     f(Sw)=0$.  
\end{proof}

Following \cite{b20}, we say that a group $G$  is $\kappa$-{\it normal}  if every   subset $F\in [G] ^{<\kappa}$  is contained in a normal subgroup  $H\in[G] ^{<\kappa}$ of $G$.

\begin{theorem}\label{t5.3} Let $\kappa$ be a cardinal of uncountable cofinality and $G$ be a  group of cardinality $|G|>\kappa$. If the group $G$ is $\kappa^+$-normal, then the $\kappa$-ballean of $G$ is emu-bounded.
\end{theorem}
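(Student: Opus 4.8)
The plan is to prove directly that an eventually macro-uniform $f\colon G\to\IR$ must be bounded off a bounded set. Recall that for the $\kappa$-ballean the bornology is exactly $\mathcal B_G=[G]^{<\kappa}$, and that the assumption $\cf(\kappa)>\omega$ makes $\mathcal B_G$ closed under countable unions. The first move isolates the only use of this $\sigma$-additivity: since the ballean is unbounded (as $|G|>\kappa$) while $G=\bigcup_{n\in\w}\{x:|f(x)|\le n\}$ is a countable union, at least one sublevel set $R=\{x\in G:|f(x)|\le M_0\}$ must be \emph{unbounded}. I fix such an $R$; on it $f$ is bounded by $M_0$.

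The key reduction is then the following: it suffices to find a set $B\in[G]^{<\kappa}$ for which the left translates $\{b^{-1}R:b\in B\}$ cover a cobounded subset of $G$. Indeed, eventual macro-uniformity applied to the entourage $E_B$ provides a bounded set $A_B$ and a constant $M_B$ with $\diam f(\{x\}\cup Bx)\le M_B$ for all $x\notin A_B$; if moreover $x\in b^{-1}R$ for some $b\in B$, then $bx\in R$ and hence $|f(x)|\le|f(bx)|+M_B\le M_0+M_B$. Thus $f$ is bounded by $M_0+M_B$ off the set $A_B\cup\bigl(G\setminus\bigcup_{b\in B}b^{-1}R\bigr)$, which is bounded once the covering holds. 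It is crucial here that $B$ be of size strictly less than $\kappa$, so that $E_B$ is a genuine entourage and $A_B$ is bounded.

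To construct the covering family I would invoke $\kappa^+$-normality. Enclosing a suitable subset of $G$ of size $\le\kappa$ in a normal subgroup $N$ with $|N|\le\kappa$, normality gives $Nx=xN$, so the saturation $NR=\bigcup_{b\in N}bR$ is precisely the union of those $N$-cosets that meet $R$; and since $|G/N|=|G|>\kappa$ there are more than $\kappa$ cosets to account for. A coset $xN$ fails to meet $R$ only when $|f|>M_0$ throughout $xN$, and, writing $f(xn)=f(n)+\phi_x(n)$ with $\phi_x(n)=f(xn)-f(n)$ controlled through eventual macro-uniformity, the cosets that avoid $R$ are seen to be governed by the representatives of large oscillation. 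The aim is to show, using again $\sigma$-additivity, that these form only a bounded family, and then to pick one translating element per relevant coset and collapse repetitions via the conjugation action available inside $N$, so that the resulting family $B$ stays of size $<\kappa$.

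The step I expect to be the main obstacle is exactly this construction of the covering family. For an \emph{arbitrary} unbounded set $R$ no covering by fewer than $\kappa$ translates need exist (for instance a proper coset-union of a size-$\kappa$ subgroup covers at most $\kappa$ points), so one must genuinely exploit that $R$ is a sublevel set of an eventually macro-uniform function rather than an arbitrary unbounded set. Reconciling the left-invariant ball structure $\{x\}\cup Bx$ of the $\kappa$-ballean with the two-sided coset combinatorics furnished by the normal subgroup $N$, while holding the translating set strictly below $\kappa$ in cardinality, is where all three hypotheses---$\kappa^+$-normality, $|G|>\kappa$, and $\cf(\kappa)>\omega$---must be used at once, and it constitutes the technical heart of the argument.
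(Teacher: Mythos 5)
Your first two steps are correct and worth keeping: since $\cf(\kappa)>\w$ the bornology $[G]^{<\kappa}$ is closed under countable unions, so some sublevel set $R=\{x\in G:|f(x)|\le M_0\}$ is unbounded; and your reduction is sound --- if some $B\in[G]^{<\kappa}$ satisfies $\bigcup_{b\in B}b^{-1}R\supset G\setminus D$ with $D$ bounded, then applying eventual macro-uniformity to the entourage $E_B$ bounds $f$ off the bounded set $A_B\cup D$. The problem is that the proof effectively ends there: the covering family is never produced, and, as you yourself concede, this is the heart of the theorem rather than a technicality. Your sketch does not resolve the cardinality tension it runs into. By normality, $\bigcup_{b\in N}b^{-1}R=NR$ is exactly the union of the $N$-cosets meeting $R$, so the natural translating set is $B=N$; but $\kappa^+$-normality only gives $|N|\le\kappa$, and when $|N|=\kappa$ the set $E_N$ is \emph{not} an entourage of the $\kappa$-ballean, so eventual macro-uniformity cannot be applied to it. Nothing in your outline shows either that $N$ can be kept of size $<\kappa$, or that the cosets avoiding $R$ have bounded union; the appeal to ``representatives of large oscillation'' and ``collapsing repetitions via conjugation'' is not an argument.

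The paper's proof avoids translate-coverings altogether, and its mechanism is exactly what your plan is missing: a way to exploit emu against a subgroup of size $\kappa$, which is too large to be a radius. By transfinite induction of length $\w_1$ (using $\kappa^+$-normality at every step) it builds an increasing chain $(H_\alpha)_{\alpha<\w_1}$ of normal subgroups of cardinality exactly $\kappa$ such that $\sup_{x\notin H_{\alpha+1}}\diam f(H_\alpha x)<\infty$. At a successor step the subgroup $H_\beta$ is written as an increasing union $\bigcup_{i\in\cf(\kappa)}\Gamma_i$ with $|\Gamma_i|<\kappa$, emu is applied to each \emph{bounded} piece $\Gamma_i$ separately, and the uncountable regular cofinality of $\kappa$ is used to extract a single finite constant valid on an unbounded set of indices $i$; this is how uniform oscillation control on cosets $Hx$ of an unbounded normal subgroup $H$ is obtained piecewise. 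Then boundedness of $f$ on $G\setminus H$ is proved by contradiction: given $(g_n)_{n\in\w}$ in $G\setminus H$ with $|f(g_n)|$ unbounded, one applies emu to the countable (hence bounded, again by $\cf(\kappa)>\w$) set $K=\{g_ng_0^{-1}:n\in\w\}$, picks $x\in H\setminus g_0^{-1}B$, and uses normality ($g_nx\in Hg_n$) to get $|f(g_n)|\le\diam f(Hg_n)+C+|f(g_0x)|$, which is a uniform bound --- a contradiction. Finally, one fixed translation by $g\in G\setminus H$ transfers boundedness from $G\setminus H$ to $H$ minus a bounded set. If you wish to complete your own argument, the covering step has to be replaced by some such coset-oscillation device, since no set of cardinality $\ge\kappa$ can ever serve as an entourage radius in this ballean.
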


\begin{proof} Given any emu-function $f:G\to\IR$, we should prove that $f$ is bounded on the complement $G\setminus B$ of some set $B\in[G]^{<\kappa}$. First, we prove two claims.

\begin{claim}\label{cl1} There  exist a normal subgroup  $H\subset G$  of cardinality $\kappa$ such that $$\sup_{x\in G\setminus H}\diam f(Hx)<\infty.$$
\end{claim}

\begin{proof} Since $G$ is a $\kappa^+$-normal group of cardinality $|G|>\kappa$, there exists a normal subgroup $H_0\subset G$ of cardinality $|H_0|=\kappa$. By transfinite induction of length $\w_1$ we shall construct an increasing sequence $(H_\alpha)_{\alpha\in\w_1}$ of normal subgroups of cardinality $\kappa$ such that for any ordinal $\alpha<\w_1$ the supremum $\sup_{x\in X\setminus H_{\alpha+1}}\diam f(H_\alpha x)$ is finite.

Assume that for some non-zero ordinal $\alpha<\w_1$ we have constructed an increasing sequence $(H_\beta)_{\beta<\alpha}$ of normal subgroups of cardinality $\kappa$ in $G$ such that for any ordinal $\beta$ with $\beta+1<\alpha$ the supremum $\sup_{x\in X\setminus H_{\beta+1}}\diam f(H_\beta x)$ is finite. If $\alpha$ is a limit ordinal, put $H_\alpha=\bigcup_{\beta<\alpha}H_\beta$.

If $\alpha=\beta+1$ is a successor ordinal, then write the group $H_\beta$ as the union $H_\beta=\bigcup_{i\in\cf(\kappa)}\Gamma_i$ of an increasing sequence $(\Gamma_i)_{i\in\cf(\kappa)}$ of sets of cardinality $|\Gamma_i|<\kappa$. Since $f$ is eventually macro-uniform, for every $i\in\cf(\kappa)$ there exists a set $B_{i}\in [G]^{<\kappa}$ such that 
$c_i:=\sup_{x\in G\setminus B_{i}}\diam f(\Gamma_i x)$ is finite. Since $G$ is $\kappa^+$-normal, the set $H_\beta\cup\bigcup_{i\in\cf(\kappa)}B_i$ is contained in a normal subgroup $H_\alpha$ of cardinality $|H_\alpha|=\kappa$. Since the cardinal $\cf(\kappa)$ is uncountable and regular, there exists $c\in\IN$ such that the set $\Omega=\{i\in\cf(\kappa): c_i\le c\}$ is unbounded in $\cf(\kappa)$. Then $H_\beta=\bigcup_{i\in\Omega}\Gamma_i$ and for any $x\in X\setminus H_\alpha=X\setminus H_{\beta+1}$ 
$$\diam f(H_\beta x)=\sup_{i\in\Omega}\diam f(\Gamma_ix)\le\sup_{i\in\Omega}c_i\le c.$$
\end{proof}

\begin{claim}\label{cl2} The function $f$ is bounded on the set $G\setminus H$.
\end{claim}

\begin{proof} To derive a contradiction, assume that $f$  is  unbounded on  $G\setminus  H$  and  choose a sequence  $(g_{n}) _{n\in \omega}$  in  $G\setminus  H$  such that  the set $\{f(g_{n}) : n\in \omega\}$
 is unbounded. We put 
 $K=\{  g_{n} g_{0} ^{-1}  : n\in\w\}$  and  choose
 $B\in  [G] ^{< \kappa}$  such that the number $$C:=\sup_{x\in X\setminus B}\diam f(Kx)$$ is finite. By Claim~\ref{cl1}, the supremum $$c:=\sup_{x\in G\setminus H}\diam f(Hx)$$is finite. Since $|H|=\kappa>|B|=|g_0^{-1}B|$,  there exits $x\in H\setminus g_0^{-1}B$.
Then $g_0x\notin B$ and 
$|f (g_{n} g_{0} ^{-1} (g_{0} x )) -  f(g_{0} x ) | \le\diam f(Kx)\le C $
 for each  $n>0$.
Since $H$  is normal,
 $g_{n} x \in g_nH=Hg_{n}$  and then
$$
\begin{aligned}
|f(g_n)|&=|f(g_n)-f(g_nx)+f(g_nx)-f(g_0x)+f(g_0x)|\le\\
&\le |f(g_n)-f(g_nx)|+|f(g_nx)-f(g_0x)|+|f(g_0x)|<\\
&<\mathrm{diam}\,f(Hg_n)+C+|f(g_0x)|<c+C+|f(g_0x)|
\end{aligned}$$for every $n$, which contradicts the choice of  $(g_{n}) _{n\in \omega}$.
This contradiction shows that $f$   is bounded on  $G \setminus H$.
\end{proof}

Fix any element $g\in G\setminus H$ and observe that $gH\subset G\setminus H$ and hence the set $f(gH)\subset f(G\setminus H)$ is bounded in the real line (according to Claim~\ref{cl2}).
Consequently, the supremum $$c:=\sup_{x\in G\setminus H}|f(x)|$$ is finite.
  
Since $f$ is eventually macro-uniform, there exists a set $B\in [G] ^{<\kappa}$  such that the number $$c':=\sup_{x\in G\setminus B}|f (gx)  -  f(x) |$$ is finite. Now take any $x\in H\setminus B$ and observe that 
$|f(x)|\le |f(gx)-f(x)|+|f(gx)|\le c'+c$, witnessing that $f$ is bounded on $H\setminus B$ and consequently, bounded on $G\setminus B=(G\setminus H)\cup(H\setminus B)$.
\end{proof}

A subset  $A$ of a group $G$ is defined to be $\kappa$-{\it large}  if  $G=FA$ for some $F\in [G] ^{<\kappa}$. In the proof of Theorem~\ref{t5.4} we shall need the following result, proved in \cite{b20}.
 
\begin{theorem}[Protasov, Slobodianiuk]\label{t:BS} Let $\kappa$ be a singular  cardinal and let $G$ be a  $\kappa$-normal group of  cardinality $\kappa$.  For  every finite partition of $G$,  at  least one cell of the partition is $\kappa$-large.
\end{theorem}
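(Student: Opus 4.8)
The plan is to dualize largeness to an avoidance property and reduce the statement to a single combinatorial lemma. Call a subset $T\subseteq G$ \emph{$\kappa$-thick} if for every $F\in[G]^{<\kappa}$ there is $g\in G$ with $Fg\subseteq T$. Unravelling the definitions gives the duality: a set $A$ is $\kappa$-large if and only if its complement $G\setminus A$ is not $\kappa$-thick (indeed, $A$ fails to be $\kappa$-large iff for every $F\in[G]^{<\kappa}$ some $g$ avoids $FA$, i.e. $F^{-1}g\subseteq G\setminus A$). So, arguing by contradiction, if $G=A_1\cup\dots\cup A_n$ is a finite partition in which no cell is $\kappa$-large, then each complement $T_m:=G\setminus A_m$ is $\kappa$-thick, while $\bigcap_{m=1}^{n}T_m=G\setminus\bigcup_m A_m=\emptyset$. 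Hence the whole theorem follows from the key lemma: \emph{under these hypotheses the intersection of finitely many $\kappa$-thick subsets of $G$ is again $\kappa$-thick} (in particular nonempty).

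Before proving the lemma I would extract two structural consequences of the hypotheses. Since $\kappa$ is singular it is a limit cardinal and $\cf(\kappa)=\lambda<\kappa$; writing $G$ as a union of $\lambda$ sets of cardinality $<\kappa$ and enclosing each of them, via $\kappa$-normality, in a normal subgroup of cardinality $<\kappa$, I obtain a family $\{H_i:i<\lambda\}$ of normal subgroups of $G$, each of size $<\kappa$, with $\bigcup_{i<\lambda}H_i=G$. Two features will be used repeatedly: every $g\in G$ lies in some $H_{i_0}$, and because $\kappa$ is a limit cardinal with $\lambda<\kappa$, any product $FH_i$ with $F\in[G]^{<\kappa}$, and any union of $\le\lambda$ such products, again has cardinality $<\kappa$, so all of these are admissible inputs to the thickness property.

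Now I would prove the lemma by induction on $n$, the only content being the step: if $S$ (the intersection of the first $n-1$ sets) and $T$ are $\kappa$-thick, then $S\cap T$ is $\kappa$-thick. Fix $F\in[G]^{<\kappa}$. Using thickness of $T$, for each $i<\lambda$ choose $t_i\in G$ with $FH_it_i\subseteq T$ (applying thickness to the small set $FH_i$). Using thickness of $S$, choose $g'\in G$ with $Ft_ig'\subseteq S$ for all $i<\lambda$ (applying thickness to the small set $F\{t_i:i<\lambda\}$). Finally pick $i_0<\lambda$ with $g'\in H_{i_0}$ and put $g^{*}:=t_{i_0}g'$. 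Then $Fg^{*}=Ft_{i_0}g'\subseteq S$; and normality gives $t_{i_0}g'\in t_{i_0}H_{i_0}=H_{i_0}t_{i_0}$, so $Fg^{*}\subseteq FH_{i_0}t_{i_0}\subseteq T$. Thus $Fg^{*}\subseteq S\cap T$, establishing thickness.

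The main obstacle is precisely this alignment in the inductive step: $\kappa$-thickness produces a translate of any prescribed small configuration inside each $T_j$ \emph{separately}, but a priori these translates sit in unrelated parts of $G$, and in the regular-cardinal case two $\kappa$-thick sets genuinely can be disjoint (as with thick sets of integers having disjoint long intervals). The device that forces a common translate is the choice $g^{*}=t_{i_0}g'$ together with the fact that the auxiliary point $g'$ must fall into one of the covering normal subgroups $H_{i_0}$; normality then slides $t_{i_0}g'$ back into the coset $H_{i_0}t_{i_0}$ already trapped in $T$. This is exactly where both hypotheses are consumed — $\kappa$-normality supplies the covering family of small normal subgroups, and singularity makes $\lambda<\kappa$, so that $\{t_i:i<\lambda\}$ is a legitimate input to the thickness of $S$.
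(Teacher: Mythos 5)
You cannot really be compared against the paper here, because the paper contains no proof of this statement: Theorem~\ref{t:BS} is quoted as a known result of Protasov--Slobodianiuk \cite{b20} and is used as a black box in the proof of Theorem~\ref{t5.4}. So your argument can only be judged on its own merits, and it is correct. The duality is right ($A$ is $\kappa$-large iff $G\setminus A$ is not $\kappa$-thick, since $g\notin FA$ iff $F^{-1}g\subseteq G\setminus A$); the reduction to the lemma that $\kappa$-thickness is closed under finite intersections (thick sets being nonempty) is right; and your inductive step is a complete proof of that lemma: each $FH_i$ has cardinality $<\kappa$, so thickness of $T$ yields $t_i$ with $FH_it_i\subseteq T$; the set $F\{t_i:i<\lambda\}$ has cardinality at most $\max(|F|,\lambda)<\kappa$ precisely because $\lambda=\cf(\kappa)<\kappa$, so thickness of $S$ yields $g'$ with $Ft_ig'\subseteq S$ for all $i<\lambda$; and choosing $i_0$ with $g'\in H_{i_0}$, normality of $H_{i_0}$ gives $t_{i_0}g'\in H_{i_0}t_{i_0}$, hence $Ft_{i_0}g'\subseteq FH_{i_0}t_{i_0}\subseteq T$, so $g^\ast=t_{i_0}g'$ lands $F$ inside $S\cap T$. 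Both hypotheses are consumed exactly where you say they are. One sentence does need repair: the blanket claim that ``any union of $\le\lambda$ such products again has cardinality $<\kappa$'' is false as stated, since $\bigcup_{i<\lambda}FH_i=G$ whenever $F\neq\emptyset$ (the cardinals $|H_i|$ are necessarily cofinal in $\kappa$; that a union of $\lambda$ sets of sizes $<\kappa$ can have size $\kappa$ is exactly what singularity means). What your proof actually uses, and what is true, is that a union of $\lambda$ many sets of size uniformly bounded by $|F|$, namely $\bigcup_{i<\lambda}Ft_i$, has cardinality at most $\max(\lambda,|F|)<\kappa$. With that sentence corrected, the proof stands as a self-contained derivation of a result the paper itself only cites.
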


\begin{theorem}\label{t5.4} Let $\kappa$ be a singular  cardinal and let $G$ be a 
group of cardinality $|G|=\kappa$. If the group $G$ is $\kappa$-normal, then the $\kappa$-ballean of $G$ is emu-bounded.
\end{theorem}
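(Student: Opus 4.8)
The plan is to reduce the statement to the problem of finding a single $\kappa$-large subset of $G$ on which $f$ is bounded, and then to produce such a set by means of the Protasov--Slobodianiuk Theorem~\ref{t:BS}. Let $f:G\to\IR$ be an emu-function. First I would record the reduction: \emph{it suffices to find a $\kappa$-large set $A\subseteq G$ with $f(A)$ bounded.} Indeed, if $A$ is $\kappa$-large, write $G=FA$ with $F\in[G]^{<\kappa}$; applying eventual macro-uniformity to the entourage $E_F$ yields a bounded set $B\in[G]^{<\kappa}$ and a constant $c_F<\infty$ with $\diam f(\{x\}\cup Fx)\le c_F$ for all $x\notin B$. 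Combined with $\sup_{a\in A}|f(a)|<\infty$, this bounds $f$ on $F(A\setminus B)$, and since $G\setminus FB\subseteq F(A\setminus B)$ while $FB\in[G]^{<\kappa}$ is bounded (the product of two cardinals smaller than $\kappa$ is smaller than $\kappa$), we conclude that $f$ is bounded off the bounded set $FB$. This target is also the correct one, since conversely a set $\{x:|f(x)|\le n\}$ that is co-bounded is automatically $\kappa$-large.

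To locate such a set $A$, I would feed the threshold partitions into Theorem~\ref{t:BS}. For each $n\in\IN$ apply it to the two-cell partition $G=\{x:|f(x)|\le n\}\cup\{x:|f(x)|>n\}$; since $G$ is a $\kappa$-normal group of singular cardinality $\kappa$, one of the two cells is $\kappa$-large. If for some $n$ the cell $\{x:|f(x)|\le n\}$ is $\kappa$-large, take $A=\{x:|f(x)|\le n\}$ and finish by the reduction above. Thus the whole content of the theorem concentrates in the remaining case, in which $A_n:=\{x:|f(x)|>n\}$ is $\kappa$-large for \emph{every} $n\in\IN$; here the goal is to contradict the assumption that $f$ is eventually macro-uniform.

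To set up this contradiction I would exploit $\kappa$-normality together with $\cf(\kappa)<\kappa$ to write $G=\bigcup_{i<\cf(\kappa)}H_i$ as an increasing union of normal subgroups $H_i\in[G]^{<\kappa}$. Applying eventual macro-uniformity to each entourage $E_{H_i}$ produces a bounded set $B_i$ and a constant $c_i<\infty$ with $\diam f(H_ix)\le c_i$ for all $x\notin B_i$, so that off $B_i$ the function $f$ is almost constant along the cosets of $H_i$. Using the $\kappa$-largeness of the sets $A_n$ one then wants to assemble a single $K\in[G]^{<\kappa}$ for which $\sup_{x\notin B}\diam f(Kx)=\infty$ for every bounded $B$, i.e.\ a witness that $f$ is not eventually macro-uniform.

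The \emph{hard part} is exactly this last step. The naive attempt, writing $G=F_nA_n$ and translating a fixed small-value point onto a large-value point, breaks down because the translating element ranges over the $<\kappa$-sized set $F_n$, and for a singular $\kappa$ a set of size $<\kappa$ need not possess any fiber of cardinality $\kappa$; hence one cannot pigeonhole a single translating element — equivalently a single entourage — out of such a cover. This is precisely the configuration that Theorem~\ref{t:BS} is designed to govern: its genuine $\kappa$-largeness, threaded through a recursion of length $\cf(\kappa)$ along the chain $(H_i)_{i<\cf(\kappa)}$ and controlled by the coset-oscillation bounds $c_i$, is what I expect to let one build a single $K\in[G]^{<\kappa}$ and an unbounded set of points on which $\diam f(Kx)$ grows without bound. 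Carrying out this recursion and checking that the bounded exceptional sets accumulated along the way remain bounded (another point where the singular-cardinal arithmetic must be handled with care) is where the real difficulty lies; once such a $K$ is produced it contradicts eventual macro-uniformity, disposes of the remaining case, and completes the proof.
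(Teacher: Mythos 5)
Your reduction (it suffices to find a $\kappa$-large set $A$ with $f(A)$ bounded) and your dichotomy via Theorem~\ref{t:BS} applied to the partitions $G=\{x:|f(x)|\le n\}\cup\{x:|f(x)|>n\}$ are both correct. But the proof stops exactly where the theorem's content begins: in the case where every $A_n=\{x:|f(x)|>n\}$ is $\kappa$-large (equivalently, $f$ is unbounded off every bounded set, since the complement of a bounded set is always $\kappa$-large), you only express the \emph{hope} that a recursion of length $\cf(\kappa)$ along a chain of normal subgroups $(H_i)_{i<\cf(\kappa)}$ will produce a set $K\in[G]^{<\kappa}$ witnessing the failure of eventual macro-uniformity. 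You never construct $K$, and you yourself identify the two obstructions (no pigeonhole over a translating set of size $<\kappa$; keeping the accumulated exceptional sets bounded) without resolving them. That unproven step is the whole theorem, so this is a genuine gap, not a finished argument along a different route.

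For comparison, the paper closes this case with a finer use of Theorem~\ref{t:BS} that needs no transfinite recursion at all. One first builds a sequence $n_0=0<n_1<n_2<\cdots$ with $n_{i+1}-n_i>i$ such that each of the four interleaved tail unions $X_{m,r}=\bigcup_{k\ge m}f^{-1}([n_{4k+r},n_{4k+r+1}))$, $r\in\{0,1,2,3\}$, has cardinality $\kappa$; this step itself requires a case analysis (whether infinitely many levels $f^{-1}([n,n+1))$ have size $\kappa$, and if not, exploiting that $\cf(\kappa)=\w$), which your sketch does not address. Then Theorem~\ref{t:BS} is applied \emph{once}, to the four-cell partition $G=\bigcup_{r=0}^3 X_{0,r}$, yielding $G=FX_{0,r}$ for some $F\in[G]^{<\kappa}$ and some $r$. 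Eventual macro-uniformity gives $B\in[G]^{<\kappa}$ and $m\in\IN$ with $\diam f(Fx)\le m$ for $x\notin B$; choosing $y\in X_{m,q}\setminus FB$ for the band $q$ with $|q-r|=2$ and writing $y\in Fx$ with $x\in X_{0,r}\setminus B$, one gets $|f(y)-f(x)|\le m$, while the growing gaps between the interleaved bands force $|f(y)-f(x)|>m$ --- a contradiction. The point you were missing is that the ``opposite band'' trick replaces the recursion entirely: largeness of one band plus mere cardinality $\kappa$ of a far-away band already contradicts eventual macro-uniformity.
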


\begin{proof}
Assuming that the $\kappa$-ballean of $G$ is not emu-bounded, we can find an emu-function  $f: G\to \IN$ such that $f(G\setminus B)$ is unbounded for every $|B|\in[G]^{<\kappa}$. This property of $f$ implies that for any $m\in\IN$ the preimage $f^{-1}([m,\infty))$ has cardinality $\kappa$.

\begin{claim} There exists a number sequence $(n_i)_{i\in\w}$ such that $n_0=0$,  $n_{i+1}-n_i>i$ for all $i\in\w$ and for every $m\in\w$ and $r\in\{0,1,2,3\}$ the set $$X_{m,r}:=\bigcup_{k=m}^\infty f^{-1}([n_{4k+r},n_{4k+r+1}))$$ has cardinality $|X_{m,r}|=\kappa$.
\end{claim}

\begin{proof} Separately we shall consider two cases. 
\smallskip

1. The set $W=\{n\in\w:|f^{-1}([n,n+1))|=\kappa\}$ is infinite. In this case let $\{n_i\}_{i\in\w}\subset\w$ be any increasing number  sequence such that $n_0=0$, $n_{i+1}-n_i>i$ and $[n_{i+1},n_i)\cap W\ne\emptyset$ for all $i\in\w$. It is clear that the sequence $(n_i)_{i\in\w}$ has the required property.
\smallskip

2. The set $W$ is finite. Put $n_0=0$ and find a number $n_1\in\IN$ with $W\subset [n_0,n_1)$. The definition of $W$ implies that for any $m>n_1$ the preimage $f^{-1}([n_1,m))$ has cardinality $<\kappa$. Taking into account that $f^{-1}([n_1,\infty))=\bigcup_{m>n_1}f^{-1}([n_1,m))$ has cardinality $\kappa$, we conclude that the cardinal $\kappa$ has countable cofinality. So, we can find a strictly increasing sequence of cardinals $(\kappa_i)_{i\in\w}$ with $\sup_{i\in\w}\kappa_i=\kappa$. Since $f^{-1}([n_1,\infty))=\bigcup_{n>m}f^{-1}([n_1,m))$ has cardinality $\kappa$, there exists a number $n_2>n_1+1$ such that $|f^{-1}([n_1,n_2))|\ge \kappa_1$. Proceeding by induction, for every $i\in\IN$ we can find a number $n_{i+1}>n_i+i$ such that $|f^{-1}([n_i,n_{i+1}))|>\kappa_i$. It is easy to see that the sequence $(n_i)_{i\in\w}$ has the required property.
\end{proof}

Since $G = \bigcup_{r=0}^3X_{0,r}$, we can apply Theorem~\ref{t:BS} and find a number $r\in\{0,1,2,3\}$ such that the set $X_{0,r}$ is $\kappa$-large in $G$ and hence  $G=FX_{0,r}$ for some set $F\in [G]^{<\kappa}$ containing the unit $e$ of $G$.

Since the function $f$ is eventually macro-uniform, there exist a set
$B\in [G]^{<\kappa} $ such that $\sup_{x\in G\setminus B}\diam f(Fx)<m$ for some number $m\in\IN$. Let $q\in\{0,1,2,3\}$ be a unique number with $|q-r|=2$. Since $|X_{m,q}|=\kappa>|FB|$, there exists an element $y\in X_{m,q}\setminus FB$. Taking into account that $y\in G\setminus FB=FX_{0,r}\setminus FB=F(X_{0,r}\setminus B)$, we see that $y\in Fx$ for some $x\in X_{0,r}\setminus B$. The choice of $B$ ensures that $\diam f(Fx)\le m$ and hence $|f(y)-f(x)|\le \diam f(Fx)\le m$. It follows from $x\in X_{0,r}=\bigcup_{k=0}^\infty f^{-1}([n_{4k+r},n_{4k+r+1}))$ that $f(x)\in [n_{4i+r},n_{4i+r+1})$ for some $i\in\w$. On the other hand, $y\in X_{m,q}=\bigcup_{k=m}^\infty f^{-1}([n_{4k+q},n_{4k+q+1}))$ yields a number $j\ge m$ such that $f(y)\in [n_{4j+q},n_{4j+q+1})$. Now we arrive to a contradiction with $|f(y)-f(x)|\le m$, considering the following four cases.

If $i<j$, then $4i+r+1\le 4(j-1)+1+r\le 4j-3+q+|r-q|=4j+q-1$ and hence 
$$f(y)\ge n_{4j+q}>n_{4j+q-1}+(4j+q-1)\ge n_{4i+r+1}+(4m+q-1)>f(x)+m.$$

If $j<i$, then $4j+q+1\le 4(i-1)+1+q\le 4i-3+r+|r-q|=4i+r-1$ and hence 
$$f(x)\ge n_{4i+r}>n_{4i+r-1}+(4i+r-1)\ge n_{4j+q+1}+(4j+r+3)>f(y)+m.$$

If $i=j$ and $r<q$, then 
$$f(y)\ge n_{4j+q}>n_{4j+q-1}+(4j+q-1)=n_{4j+r+1}+(4j+q-1)>f(x)+m.$$

If $i=j$ and $q<r$, then 
$$f(x)\ge n_{4i+r}>n_{4i+r-1}+(4i+r-1)=n_{4j+q+1}+(4j+r-1)>f(y)+m.$$
\end{proof}

\begin{proposition}\label{t5.5} If  the  $\kappa$-ballean of each subgroup $H$ of $G$ with  $|H|= \kappa$  is emu-bounded,  then the  $\kappa$-ballean of $G$  is  emu-bounded.
\end{proposition}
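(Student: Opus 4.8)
The plan is to argue by contradiction, localizing a failure of emu-boundedness for the whole group $G$ to a single subgroup of size exactly $\kappa$. First I would dispose of the trivial case: if $|G|<\kappa$, then $G\in[G]^{<\kappa}$, so the $\kappa$-ballean of $G$ is bounded and hence emu-bounded (take $B=G$). Thus I may assume $|G|\ge\kappa$. Now fix an emu-function $f:G\to\IR$ on the $\kappa$-ballean of $G$ and suppose, towards a contradiction, that $f$ is unbounded on $G\setminus B$ for every $B\in[G]^{<\kappa}$; the goal is to produce a subgroup $H$ of cardinality $\kappa$ on which this failure survives, contradicting the hypothesis applied to $H$.

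The first key ingredient is a restriction principle for subgroups. For a subgroup $H\le G$, the basic entourages of the $\kappa$-ballean of $H$ are the $E_{B'}$ with $B'\in[H]^{<\kappa}$, and for $x\in H$ the ball $E_{B'}[x]=\{x\}\cup B'x$ lies inside $H$ and coincides with the ball computed in $G$. Hence the $\kappa$-ballean of $H$ is precisely the subballean of the $\kappa$-ballean of $G$ induced on $H$, and I would check that $f{\restriction}H$ remains an emu-function: given $B'\in[H]^{<\kappa}$, emu-ness of $f$ on $G$ supplies $D\in[G]^{<\kappa}$ with $\sup_{x\in G\setminus D}\diam f(E_{B'}[x])<\infty$, and then $D\cap H\in[H]^{<\kappa}$ witnesses the same estimate for $f{\restriction}H$ on $H\setminus(D\cap H)$, since $E_{B'}[x]\subseteq H$ for $x\in H$.

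The second ingredient is a reformulation of the assumed unboundedness in terms of level sets. Putting $A_n=\{x\in G:|f(x)|>n\}$ for $n\in\w$, the assumption that $f$ is unbounded on $G\setminus B$ for every $B\in[G]^{<\kappa}$ is equivalent to $|A_n|\ge\kappa$ for all $n$ (otherwise $A_n$ itself would be a bounded set off which $f$ is bounded). For each $n$ I would choose $C_n\subseteq A_n$ with $|C_n|=\kappa$, set $S=\bigcup_{n\in\w}C_n$, and let $H=\langle S\rangle$. Since $\kappa$ is infinite, $|S|=\kappa$ and therefore $|H|=\kappa$, while $H\cap A_n\supseteq C_n$ has cardinality $\kappa$ for every $n$. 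By the same level-set equivalence read inside $H$, this says exactly that $f{\restriction}H$ is unbounded on $H\setminus B'$ for every $B'\in[H]^{<\kappa}$. But $f{\restriction}H$ is an emu-function on the $\kappa$-ballean of $H$, which by hypothesis is emu-bounded (as $|H|=\kappa$), so $f{\restriction}H$ must be bounded off some $B'\in[H]^{<\kappa}$ — a contradiction. Hence $f$ is bounded on $G\setminus B$ for some $B\in[G]^{<\kappa}$, and the $\kappa$-ballean of $G$ is emu-bounded.

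I expect the main obstacle to be the bookkeeping in the restriction principle, namely verifying that the $\kappa$-ballean of $H$ literally is the induced subballean and that the emu-property is inherited by $f{\restriction}H$, together with ensuring $|H|=\kappa$ exactly (not merely $\le\kappa$), so that the hypothesis is applicable. The conceptual core is the equivalence between ``$f$ is unbounded off every bounded set'' and ``every super-level set $A_n$ has cardinality $\ge\kappa$'', which lets me generate a witnessing subgroup of the correct size from the points of large value; everything else is routine.
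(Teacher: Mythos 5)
Your proof is correct and follows essentially the same route as the paper's: assume failure, observe that every level set $f^{-1}([n,\infty))$ must have cardinality $\ge\kappa$, use pieces of these level sets to generate a subgroup $H$ of cardinality exactly $\kappa$ on which the restriction of $f$ is still an emu-function unbounded off every bounded subset, contradicting the hypothesis applied to $H$. Your explicit verifications (that the $\kappa$-ballean of $H$ is the induced subballean, that $f{\restriction}H$ inherits the emu-property, and the trivial case $|G|<\kappa$) are left implicit in the paper, but they are elaborations of the same argument, not a different approach.
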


\begin{proof} Assuming that the $\kappa$-ballean of $G$ is not emu-bounded, we can find an emu-function  $f: G \to \mathbb{N}$, which is unbounded  on each subset  $G \setminus B$,  $|B|< \kappa$. This property of $f$ implies that for every $m\in\IN$ the set $f^{-1}([m,\infty)$ has cardinality $\ge\kappa$ and hence contains a subset $X_m$ of cardinality $|X_m|=\kappa$. Then the set $\bigcup_{m\in\w}X_m$ generates a subgroup $H\subset G$ of cardinality $|H|=\kappa$ such that $f(H\setminus B)$ is unbounded for any $B\in[H]^{<\kappa}$. Now we see that the restriction $f{\restriction}H$ witnesses that $H$ is not emu-bounded.
\end{proof}

\begin{remark} Theorem~\ref{t5.5}  shows that  Theorem~\ref{t5.4}  is true if $|G|\geq\kappa $.
By Theorem~\ref{t5.1}, \ref{t5.2}, \ref{t5.3}, \ref{t5.4}, \ref{t5.5},
for an uncountable cardinal  $\kappa$,
 the   $\kappa$-ballean of an  Abelian group  $G$ is emu-bounded if and only if
  either $\kappa$ is singular or  $\kappa$  is regular and $|G|> \kappa$.
In this case, the  emu-boundedness is equivalent to the so-boundedness.
If $\kappa$ is regular  and $G$  is an Abelian group of cardinality $|G|>\kappa$,  then the
$\kappa$-ballean of $G$ is emu-bounded but the  $\kappa$-ballean of every subgroup
$H$,  $|H|=\kappa $ is not emu-bounded.
\end{remark}

\begin{remark} By Theorems~\ref{t5.1} and \ref{t5.3}, the $\w_{1}$-ballean of the group  $\mathbb{R}$ is emu-bounded  if and only if CH holds.  On the other  hand,  by  Theorem~\ref{t5.2}, the $\w_1$-ballean of the free group  $F({\mathbb{R}})$  is not emu-bounded in ZFC.
\end{remark}

\begin{question} Is the $\w_1$-ballean of the permutation group  $S_{\omega}$ emu-bounded  in ZFC?
\end{question}

%We say that a function  $f: X \to \mathbb{R}$  on a  ballean $X$  is {\it constant at  infinity}  if  there exists $c\in \mathbb{R}$  such that, for every  $\varepsilon> 0$,   the set  $\{x\in X: |f(x) -  f(c)|> \varepsilon \}$ is bounded.

By analogy with Theorem 3.1 of \cite{FP}, the following theorem can be proved.

\begin{theorem}\label{t4.5} If $G$ is an uncountable $\w_1$-normal group, then
 the finitary ballean of $G$  is  bo-bounded and so-bounded,  and  every so-function  on $G$  is constant at infinity.
\end{theorem}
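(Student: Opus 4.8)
The plan is to reduce all three assertions to the single statement that \emph{every slowly oscillating $f\colon G\to\IR$ is constant at infinity}. Since the finitary ballean is locally finite, "bounded off a finite set" is the same as "bounded", so constancy at infinity immediately gives so-boundedness; running the whole scheme below with a fixed constant $C$ in place of $\e$ (replacing every "$\diam\le\e$" by "$\diam\le C$") yields bo-boundedness; and the last clause is the constancy itself. I record slow oscillation in the form: for every $g\in G$ and $\e>0$ the set $A^\e_g:=\{x\in G:|f(gx)-f(x)|\ge\e\}$ is finite. As the finitary coarse structure is invariant under right translations (which are asymorphisms), the symmetric statement $|f(xg)-f(x)|\to0$ holds as well, so $f$ is ``two-sidedly almost invariant at infinity''.

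\emph{Lemma A (level sets are uncountable).} If $\{f\ge t\}$ is infinite and $s<t$, then $\{f>s\}$ is uncountable. To see this put $\e=(t-s)/2$ and pick distinct $x_k\in\{f\ge t\}$. For each $g$ the set $A^\e_g$ is finite, so there is $K_g$ with $x_k\notin A^\e_g$ for all $k\ge K_g$, whence $f(gx_k)>t-\e>s$, i.e. $gx_k\in\{f>s\}$. Now $G=\bigcup_{m\in\w}\{g:K_g\le m\}$; as $G$ is uncountable some $G_m:=\{g:K_g\le m\}$ is uncountable, and $G_m x_m\subseteq\{f>s\}$ gives the claim. This is the only point where uncountability of $G$ is used, and it is exactly what will later turn ``countable'' into ``finite''.

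\emph{Lemma B (per-coset clustering at arbitrary precision).} For each $\e>0$ there is a countable normal subgroup $H\triangleleft G$ with $\sup_{x\in G\setminus H}\diam f(Hx)\le\e$. I would build a continuous increasing chain $(H_\alpha)_{\alpha\le\w}$ of countable normal subgroups: at a successor I write $H_\alpha=\bigcup_{i\in\w}\Gamma_i$ with $\Gamma_i$ finite increasing, use slow oscillation to choose finite $B_i$ with $\diam f(\Gamma_i x)\le\e$ for $x\notin B_i$, and let $H_{\alpha+1}$ be a countable normal subgroup containing $H_\alpha\cup\bigcup_i B_i$ (available by $\w_1$-normality); then $x\notin H_{\alpha+1}$ forces $\diam f(H_\alpha x)\le\e$. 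Taking $H=H_\w=\bigcup_n H_n$ gives $\diam f(Hx)=\sup_n\diam f(H_nx)\le\e$ for $x\notin H$. The decisive point is that slow oscillation supplies the \emph{same} $\e$ for all pieces $\Gamma_i$; this is precisely what makes the construction work for $\kappa=\w$ and avoids the uncountable-cofinality hypothesis that Theorem~\ref{t5.3} needs in the merely eventually macro-uniform setting.

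\emph{Core (two-valued case) and the main obstacle.} For $f\colon G\to\{0,1\}$ slow oscillation says $U:=f^{-1}(1)$ satisfies $|U\triangle hU|<\w$ for every $h$. Apply Lemma B with $\e<1$: off the countable $H$ every coset is monochromatic, so $U\setminus H=\pi^{-1}(\bar U)$ for some $\bar U\subseteq\Gamma:=G/H$. Because the cosets are infinite while $U\triangle hU$ is finite, the image $\bar U$ is \emph{exactly} invariant under every left translation of $\Gamma$; by transitivity of left multiplication $\bar U=\emptyset$ or $\bar U=\Gamma$, so $U$ or $U^{c}$ is contained in the countable set $H$, and Lemma A upgrades ``countable'' to ``finite''. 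Thus two-valued so-functions are constant at infinity. The real-valued conclusion is strictly stronger (it asserts that the Higson corona is a single point, not merely connected), and I expect the synthesis from the two-valued Core to the real case to be the hard part. The plan is to feed the Core with genuinely slowly oscillating two-valued functions manufactured from $f$: after Lemma B pass to $\Gamma$, where $\tilde f$ is slowly oscillating and bounded on each coset by $\e$; the obstruction is that a naive threshold $1_{\{\tilde f>m\}}$ fails to be slowly oscillating near a level. The remedy — and the step I expect to require the most care, following the proof of Theorem~3.1 of \cite{FP} — is to choose cut levels adapted to the uncountable family of coset-values (using Lemma A to ensure no level accumulates), making the resulting two-valued functions slowly oscillating so that the Core applies and pins $f$ to one value off a finite set.
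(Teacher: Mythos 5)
First, a point of comparison: the paper itself contains no proof of this theorem --- it only asserts that it ``can be proved by analogy with Theorem 3.1 of \cite{FP}'' --- so your proposal has to stand on its own. Much of it does: your Lemma A is correct, your Lemma B is correct and is exactly the right finitary adaptation of Claim 5.2 from the paper's proof of Theorem 5.3 (your observation that slow oscillation supplies one $\e$ for all pieces $\Gamma_i$, eliminating the cofinality hypothesis, is precisely the point), and the two-valued Core is correct after two harmless repairs (arrange $H$ to be infinite so cosets are infinite, and note $\bar U\subset\Gamma\setminus\{e_\Gamma\}$, so invariance plus transitivity gives $\bar U=\emptyset$ or $\bar U=\Gamma\setminus\{e_\Gamma\}$, which is what you use). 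One side remark is false but unused: right translations being asymorphisms only says $x\mapsto f(xg)$ is again slowly oscillating; it does \emph{not} give $|f(xg)-f(x)|\to 0$, which generally fails for the left-invariant finitary structure. The genuine gap is that \emph{all three} conclusions of the theorem rest on the real-valued synthesis, which you explicitly leave open. Your proposed remedy --- thresholds $1_{\{f>m\}}$ at cut levels where no coset value accumulates --- is not available from what you have proved: nothing in Lemmas A, B excludes that the coset values of $f$ are dense in a nondegenerate interval with every level accumulated by uncountably many cosets, in which case no admissible cut level exists; and even countably many ``straddling'' cosets destroy exact invariance, which your Core genuinely needs (a symmetric difference of unions of infinite cosets is finite only if it is empty). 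Worse, for bo-boundedness the two-valued route is impossible in principle: once $C\ge 2$, \emph{every} $\{0,1\}$-valued function is boundedly oscillating, so ``running the scheme with $C$ in place of $\e$'' has no Core left to run.

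The gap can, however, be closed with the tools you already built, by running the Core's translation--transitivity argument on the real-valued $f$ itself, approximately, instead of on indicators. Fix $\e>0$ and let $H=H_\e$ be the countable \emph{infinite} normal subgroup from Lemma B. For $x,x'\in G\setminus H$ put $g=x'x^{-1}$; since the coset $Hx$ is infinite and $A^\e_g$ is finite, there is $y\in Hx\setminus A^\e_g$, and then $|f(x)-f(y)|\le\e$ (same coset), $|f(gy)-f(y)|<\e$, and $gy\in gHx=Hgx=Hx'$ by normality, so $|f(x')-f(gy)|\le\e$; altogether $|f(x)-f(x')|\le 3\e$. Thus $f$ lies within $3\e$ of a constant $c_\e$ on $G\setminus H_\e$. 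Picking points outside $H_\e\cup H_{\e'}$ shows $|c_\e-c_{\e'}|\le 3\e+3\e'$, so the $c_\e$ converge to some $c$ with $\{x:|f(x)-c|>6\e\}\subset H_\e$ countable for every $\e$; your Lemma A then upgrades ``countable'' to ``finite'', i.e.\ $f$ is constant at infinity, and so-boundedness follows as you say. The same argument verbatim with the fixed constant $C$ (your Lemmas A and B do go through with $C$ in place of $\e$) shows every bo-function is within $3C$ of a constant off a countable set, hence, by the $C$-version of Lemma A, bounded off a finite set --- this, rather than a two-valued reduction, is the correct route to bo-boundedness.
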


\begin{question} Assume that the finitary ballean of a group $G$ is so-bounded.
Is every so-function on $G$ constant at infinity?
\end{question}

\begin{question} Is the so-boundedness of the finitary ballean of a group equivalent to its bo-boundedness?
\end{question}

Finally, we shall discuss the mu-boundedness of finitary balleans on groups.
Since the fintary ballean of any group is locally finite, each emu-function is a mu-function. Consequently, the finitary ballean of any group is mu-bounded if and only if it is emu-bounded.

\begin{definition}
A group $G$ is defined to be
\begin{itemize}
\item {\em $n$-Shelah} for some $n\in\IN$ if for each subset $A\subset G$ of cardinality $|A|=|G|$ we have $A^n=G$;
\item {\em Shelah} if it is $n$-Shelah for some $n\in\IN$;
\item {\em almost Shelah}  if for each subset $A\subset G$ of cardinality $|A|=|G|$ there exists $n\in\IN$ such that $A^n=G$;
\item {\em J\'onsson} if each subsemigroup $A\subset G$ of cofinality $|A|=|G|$ coincides with $G$;
\item {\em Kurosh} if each subgroup $A\subset G$ of cofinality $|A|=|G|$ coincides with $G$;
%\item {\em semi-Bergman} if for each subset $A\subset G$ with $G=\bigcup_{n\in\w}A^n$ there exists $n\in\IN$ such that $A^n=G$;
%\item {\em Bergman}  if for each subset $A=A^{-1}\subset G$ with $G=\bigcup_{n\in\w}A^n$ there exists $n\in\IN$ such that $A^n=G$;
%\item {\em of uncountable cofinality} if $G$ cannot be written as the union a strictly increasing sequence of subgroups;
\item {\em Bergman} if $G$ cannot be written as the union a strictly increasing sequence $(X_n)_{n\in\w}$ of subsets such that $X_n=X_n^{-1}$ and $X_n^2\subset X_{n+1}$ for all $n\in\w$.
\end{itemize}
\end{definition}
For any group $G$ the following implications hold:
$$\xymatrix{
\mbox{finite}\ar@{<=>}[r]&\mbox{1-Shelah}\ar@{=>}[r]&\mbox{Shelah}\ar@{=>}[r]&\mbox{almost Shelah}\ar@{=>}[r]&\mbox{J\'onsson}\ar@{=>}[r]&
\mbox{Kurosh}
}
$$
In \cite{Shelah} Shelah constructed a ZFC-example of an uncountable J\'onsson group and a CH-example of an infinite 6640-Shelah group. More precisely, for every cardinal $\kappa$ with $\kappa^+=2^\kappa$ there exists a 6640-Shelah group of cardinality $\kappa^+=2^\kappa$. In \cite{Berg} Bergman proved that the permutation group $S_X$ of any set $X$ is Bergman. Later it was shown \cite{DT}, \cite{Tol} that many automorphism groups (of sufficiently homogeneous structures) are Bergman.%Macpherson and Neumann \cite{MN} proved that the permutation group $S_X$ of any set $X$ is of uncountable cofinality. By \cite{DH}, a group has strong uncountable cofinality if and only if it is Bergman of uncountable cofinality. Any infinite finitely generated group has uncountable cofinality but is not Bergman.

%A group $G$ is defined to be {\em mu-bounded} if its finitary ballean is mu-bounded. Since the finitary ballean of any group is locally finite, each emu-function is a mu-function, which implies that the finitary ballean of any group is emu-bounded if and only if it is mu-bounded.

\begin{theorem} \label{l} For a  group $G$ the following conditions are equivalent:
\begin{enumerate}
\item The finitary ballean of $G$ is mu-bounded.
\item The finitary ballean of $G$ is emu-bounded.
\item The group $G$ is Bergman.
\end{enumerate}
The conditions \textup{(1)--(3)} follow from
\begin{enumerate}
\item[(4)] $G$ is almost Shelah and $\cf(|G|)>\w$.
\end{enumerate}
\end{theorem}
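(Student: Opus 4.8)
The plan is to establish the cycle $(1)\Leftrightarrow(2)\Leftrightarrow(3)$ together with $(4)\Rightarrow(3)$. The equivalence $(1)\Leftrightarrow(2)$ is already recorded in the discussion preceding the theorem: since the finitary ballean of a group is locally finite, it is mu-bounded if and only if it is emu-bounded. So the real content is the equivalence $(1)\Leftrightarrow(3)$, which I would handle through the translation (oscillation) norm of a macro-uniform function, and then $(4)\Rightarrow(3)$.

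For $(3)\Rightarrow(1)$ I would take any macro-uniform $f\colon G\to\IR$ and, after subtracting the constant $f(e)$, assume $f(e)=0$. The key device is the function
\[
\ell_f(x):=\sup_{z\in G}|f(xz)-f(z)|.
\]
Macro-uniformity applied to the one-element (hence finite, hence bounded) set $B=\{x\}$ says precisely that $\ell_f(x)=\sup_{z}\diam f(\{z,xz\})<\infty$, so $\ell_f$ is finite-valued. Re-indexing the suprema by the substitutions $w=yz$ and $z\mapsto xw$ shows $\ell_f(e)=0$, $\ell_f(x^{-1})=\ell_f(x)$ and $\ell_f(xy)\le\ell_f(x)+\ell_f(y)$, i.e.\ $\ell_f$ is a symmetric subadditive length function. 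If $\ell_f$ were unbounded, the sets $Y_n:=\{x:\ell_f(x)\le 2^n\}$ would be symmetric, contain $e$, satisfy $Y_n^2\subset Y_{n+1}$ by subadditivity, and cover $G$; passing to a subsequence they would form a strictly increasing chain contradicting the Bergman property. Hence $\ell_f$ is bounded by some $L$, and putting $z=e$ gives $|f(x)|=|f(x)-f(e)|\le\ell_f(x)\le L$, so $f$ is bounded. I expect this to be the main obstacle: macro-uniformity controls translates only by \emph{finite} sets, so the naive sublevel sets $\{x:|f(x)|\le n\}$ are neither symmetric nor closed under a product law; the passage to the genuinely subadditive norm $\ell_f$, whose pointwise finiteness is exactly the content of macro-uniformity on singletons, is what makes the chain argument go through.

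For the converse $(1)\Rightarrow(3)$ I would argue contrapositively. If $G$ is not Bergman, fix a strictly increasing chain $(X_n)_{n\in\w}$ of symmetric subsets with $X_n^2\subset X_{n+1}$ and $\bigcup_n X_n=G$; replacing each $X_n$ by $X_n\cup\{e\}$ we may assume $e\in X_0$. Define $f(x)=\min\{n:x\in X_n\}$. Then $f$ is unbounded because the chain is strictly increasing. It is also macro-uniform: given a finite $B\subset G$ pick $m$ with $B\cup B^{-1}\subset X_m$; for $x$ with $f(x)=n$ and $b\in B$ one has $bx\in X_mX_n\subset X_{\max(m,n)+1}$, while $x=b^{-1}(bx)$ forces $f(x)\le\max(m,f(bx))+1$, so $\diam f(\{x\}\cup Bx)$ is bounded by a constant depending only on $B$. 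Thus $f$ is an unbounded macro-uniform function and $G$ is not mu-bounded.

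Finally, for $(4)\Rightarrow(3)$, suppose toward a contradiction that $G$ is almost Shelah with $\cf(|G|)>\w$ yet not Bergman, and fix a witnessing chain $(X_n)$ as above with $e\in X_0$. Since $\cf(|G|)>\w$, the countable cover $G=\bigcup_n X_n$ cannot have all cells of cardinality $<|G|$, so $|X_m|=|G|$ for some $m$. Almost Shelah applied to $A=X_m$ yields $n\in\IN$ with $X_m^n=G$; iterating $X_m^2\subset X_{m+1}$ gives $X_m^{2^k}\subset X_{m+k}$, hence $X_m^n\subset X_{m+k}$ once $2^k\ge n$, whence $X_{m+k}=G$. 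This contradicts the strict increase of the chain. Therefore $G$ is Bergman and, by the equivalences above, satisfies $(1)$–$(3)$.
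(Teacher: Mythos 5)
Your proposal is correct, and most of it coincides with the paper's own proof: the equivalence $(1)\Leftrightarrow(2)$ by local finiteness of the finitary ballean, the contrapositive proof of $(1)\Rightarrow(3)$ via the level function $f(x)=\min\{n:x\in X_n\}$ of a Bergman-violating chain (your estimate $|f(bx)-f(x)|\le m+1$ is exactly the paper's $\varphi(x)-(1+\varphi(g^{-1}))\le\varphi(gx)\le\varphi(x)+(1+\varphi(g))$), and the proof of $(4)\Rightarrow(3)$ via $\cf(|G|)>\w$ giving $|X_m|=|G|$ and the induction $X_m^{2^k}\subset X_{m+k}$. The genuine difference is in $(3)\Rightarrow(1)$. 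The paper fixes, for each finite $F\subset G$, a constant $n_F$ with $\varphi(Fx)\subset[-n_F,n_F]+\varphi(x)$, forms the sublevel sets $X_n=\{g: n_{\{g,g^{-1}\}}\le n\}$, and applies the Bergman property to the chain $Y_n=X_n^{2^n}$ --- exponential \emph{powers} of linearly thresholded sets --- after which it still needs a closing induction $\varphi(X_n^k)\subset[-nk,nk]$ to convert $G=X_n^{2^n}$ into boundedness of $\varphi$. You instead package the same macro-uniformity data into the symmetric subadditive translation norm $\ell_f(x)=\sup_{z}|f(xz)-f(z)|$ (whose finiteness is macro-uniformity on singletons) and apply Bergman to its sublevel sets at exponential \emph{thresholds}, $Y_n=\{x:\ell_f(x)\le 2^n\}$, where doubling $Y_n^2\subset Y_{n+1}$ is immediate from subadditivity and boundedness of $f$ falls out of $|f(x)|\le\ell_f(x)$ with no induction. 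The two chains are dual implementations of one idea; yours buys a cleaner endgame, at the small cost of verifying that $\ell_f$ is a length function. One step that both you and the paper leave implicit, and that you should spell out since the paper's definition of Bergman demands a \emph{strictly} increasing chain: the chain $(Y_n)$ need not be strictly increasing, so one must observe that it cannot stabilize below $G$ (its union is $G$, while unboundedness of $\ell_f$ keeps every $Y_n$ proper) and that the doubling property survives passing to a subsequence, since $Y_{n_k}^2\subset Y_{n_k+1}\subset Y_{n_{k+1}}$.
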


\begin{proof} The equivalence $(1)\Leftrightarrow(2)$ follows from the local finiteness of the finitary ballean on $G$.

$(1)\Ra(3)$ Assume that the group $G$ fails to be Bergman. Then $G=\bigcup_{n\in\w}X_n$ for some strictly increasing sequence of subsets of $G$ such that $X_n=X_n^{-1}$ and $X_n^2\subset X_{n+1}$ for all $n\in\w$. We lose no generality assuming that $X_0=\emptyset$.

We claim that the function $\varphi:G\to\IN\subset\IR$ assigning to each $x\in X$ the unique number $n\in\IN$ such that $x\in X_{n}\setminus X_{n-1}$ is macro-uniform.
It follows from $X_n^2\subset X_{n+1}$ that $$\varphi(gx)\le1+\max\{\varphi(g),\varphi(x)\}\le \varphi(x)+(1+\varphi(g))$$ for every $g,x$. Then $\varphi(x)=\varphi(g^{-1}gx)\le \varphi(gx)+(1+\varphi(g^{-1}))$ and hence $\varphi(gx)\ge \varphi(x)-(1+\varphi(g^{-1})$.
Therefore,
$$\varphi(x)-(1+\varphi(g^{-1}))\le \varphi(gx)\le\varphi(x)+(1+\varphi(g)),$$
which implies that the map $\varphi$ is macro-uniform. Since the sequence $(X_n)_{n\in\w}$ is strictly increasing, the map $\varphi$ is unbounded, so the finitary ballean of $G$ is not $mu$-bounded.
\smallskip

$(3)\Ra(1)$  Assume that $G$ admits an unbounded macro-uniform function $\varphi:G\to\IR$. We lose no generality assuming that $\varphi(e)=0$ where $e$ is the unit of the group $G$. Then for every finite set $F\subset G$ there exists a number $n_F\in\IN$ such that $\varphi(Fx)\subset [-n_F,n_F]+\varphi(x)$ for every $x\in G$.
For every $n\in\IN$ let $X_n=\{g\in G:n_{\{g,g^{-1}\}}\le n\}$ and observe that $(X_n)_{n\in\w}$ is an increasing sequence such that $X_n=X_n^{-1}$ and $\bigcup_{n\in\w}X_n=G$. For every $n\in\w$ let $Y_n=X_n^{2^n}$ and observe that $Y_n=Y_n^{-1}$ and $Y_n\cdot Y_n=X_n^{2^n}X_n^{2^n}=X_n^{2^{n+1}}\subset X_{n+1}^{2^{n+1}}=Y_{n+1}$.

Assuming that $G$ is Bergman, we conclude that $G=Y_n=X_n^{2^n}$ for some $n\in\w$.

By induction we shall prove that $\varphi(X_n^k)\subset[-nk,nk]$ for $k\in\w$. For $k=0$ we have $\varphi(X_n^0)=\varphi(\{e\})=\{0\}=[-n0,n0]$. Assume that for some $k\in\w$ we have proved that $\varphi(X_n^k)\subset[-nk,nk]$. Taking into account that for every $g\in X_n$ and $x\in G$ we have $\varphi(gx)\subset \varphi(x)+[-n_{\{g\}},n_{\{g\}}]\subset \varphi(x)+[-n,n]$, we conclude that $\varphi(X_nx)\subset \varphi(x)+[-n,n]$ and hence $$\varphi(X_n^{k+1})=\varphi(X_n\cdot X_n^k)\subset \varphi(X_n^k)+[-n,n]\subset[-nk,nk]+[-n,n]\subset[-n(k+1),n(k+1)].$$
Then $\varphi(G)=\varphi(X_n^{2^n})\subset[-n2^n,n2^n]$, which means that the function $\varphi$ is bounded.
\smallskip

Finally we prove that $(4)\Ra(3)$. Assume that $G$ is almost Shelah and $\cf(|G|)>\w$.  Assume that $G=\bigcup_{n\in\w}X_n$ for some increasing sequence of subsets such that $X_n=X_n^{-1}$ and $X_n\cdot X_n\subset X_{n+1}$ for all $n\in\w$. By induction we can show that $X_n^{2^k}\subset X_{n+k}$ for every $k\in\w$.

 Since $\cf(|G|)>\w$, there exists $n\in\w$ such that $|X_n|=|G|$. Since $G$ is almost Shelah, $G=X_n^{2^m}\subset X_{n+m}$ for some $m\in\w$ and hence $X_{n+m}=G$, which contradicts the choice of the sequence $(X_k)_{k\in\w}$.
\end{proof}

\begin{corollary} The finitary ballean of the permutation group $S_X$ of any set $X$ is mu-bounded.
\end{corollary}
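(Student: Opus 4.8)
The plan is to read this corollary directly off Theorem~\ref{l}. That theorem establishes the equivalence of mu-boundedness of the finitary ballean with the purely group-theoretic Bergman property, so it suffices to verify that $S_X$ is Bergman. But this is exactly Bergman's theorem from \cite{Berg}, recalled above: the permutation group $S_X$ of any set $X$ cannot be written as the union of a strictly increasing sequence $(X_n)_{n\in\w}$ of symmetric subsets with $X_n^2\subset X_{n+1}$.

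Concretely, I would argue as follows. First, invoke \cite{Berg} to conclude that the group $G=S_X$ satisfies condition (3) of Theorem~\ref{l}. Then apply the implication $(3)\Ra(1)$ of that theorem to obtain that the finitary ballean of $S_X$ is mu-bounded. Since $(1)\Leftrightarrow(2)$ holds as well, one in fact gets emu-boundedness at no extra cost.

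There is no genuine obstacle here, since all the substance has already been absorbed into the two ingredients: Bergman's combinatorial strong boundedness of $S_X$ on one side, and the argument of Theorem~\ref{l} on the other, where the Bergman property is converted into mu-boundedness by writing $G$ as a single power $X_n^{2^n}$ of a symmetric set on which any candidate unbounded macro-uniform function is forced to be bounded. The only caveat worth recording is the degenerate case of finite $X$: then $S_X$ is finite, its finitary ballean is bounded, and the conclusion holds trivially. The content of the statement therefore lies entirely in the infinite case, which is precisely the situation addressed by Bergman's theorem.
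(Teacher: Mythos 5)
Your proposal is correct and matches the paper's intended argument exactly: the corollary is read off from Theorem~\ref{l} via the implication $(3)\Rightarrow(1)$, using Bergman's result from \cite{Berg} that $S_X$ is a Bergman group (with the finite case being trivial, as a finite group is vacuously Bergman). No difference in approach worth noting.
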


\begin{problem} Find the largest $n$ for which every $n$-Shelah group is finite.
\end{problem}

\begin{remark} Answering the problem \cite{MO}, Yves Cornulier proved that each 3-Shelah group is finite.
\end{remark}

\vskip 5pt

\begin{question} Is every so-function on a Bergman group (in particuar, on the group  $S_{X}$) constant at infinity?
\end{question}


\begin{thebibliography}{}

\bibitem{MO} T.~Banakh, {\em A Shelah group in ZFC}, {\tt https://mathoverflow.net/questions/313516}.

\bibitem{BPnorm} T.~Banakh, I.~Protasov, {\em The normality and bounded growth of balleans},\newline {\tt https://arxiv.org/abs/1810.07979}.


\bibitem{b21}{ T. Banakh,  I. Protasov,   D. Repovs, S. Slobodianiuk,  }{\it  Classifying  homogeneous  cellular  ordinal balleans up  to coarse  equivalence},  Colloq. Math. {\bf 149} (2017), no. 2, 211-224.




\bibitem{Berg} G. M. Bergman, {\em Generating infinite symmteric groups}, Bull. London Math. Soc. {\bf 38} (2006) 429--440 (an update at https://arxiv.org/abs/math/0401304).

\bibitem{CH} Y.~Cornulier, P.~de la Harpe, {\em Metric geometry of locally compact groups}, EMS Tracts in Mathematics, 25. Z\"urich, 2016.


\bibitem{Dr} A.N.~Dranishnikov, {\em Asymptotic topology}, Russian Math. Surveys,  {\bf 55}:6 (2000), 1085--1129.

\bibitem{DH} M. Droste and W. C. Holland, Generating automorphism groups of chains,
Forum Mathematicum 17 (2005), 699-710

\bibitem{DT} M.~Droste, J.~Truss, {\em The uncountable cofinality of the automorphism group of the countable universal distributive lattice}, Demonstratio Math. {\bf 44}:3 (2011), 473--479.

\bibitem{DH} J.~Dydak, C.~Hoffland, {\em An alternative definition of coarse structures}, Topology Appl. {\bf 155}:9 (2008), 1013--1021.

\bibitem{FP} M. Filali, I.V. Protasov, {\em Slowly oscillating functions on locally compact groups}, Applied General Topology, {\bf 6}:1 (2005), 67--77.

\bibitem{b17} { P. de la Harpe,} {\it Topics in geometric group theory},  University  Chicago  Press,  2000.

\bibitem{Jech} T.~Jech, {\em Set Theory}, Springer, 2003.

\bibitem{NYu} P.~Nowak, G.~Yu, {\em Large scale geometry}, EMS Textbooks in Mathematics, Z\"urich, 2012.

\bibitem{b19} { I.V. Protasov, }{\it Normal ball structures },  Math. Stud. {\bf 20}    (2003), 3-16.


\bibitem{Pr18} I.V.~Protasov, {\em A note on bornologies}, Mat. Stud. {\bf 49}:1 (2018), 13--18.

\bibitem{PB} I.~Protasov, T.~Banakh, {\em Ball stuctures and colorings of graphs and groups}, VNTL Publ. 2003, 148p.


\bibitem{b20} { I. Protasov,  S. Slobodianiuk,} {\it A note on partitions of groups},  Questions and Answers in General  Topology,  {\bf 33}    (2015), 61-70.

\bibitem{b18} { I. Protasov,  S. Slobodianiuk,} {\it On asymorphisms of groups},  J. Group Theory {\bf 20}    (2017), 393-399.


\bibitem{PZ} I.~Protasov, M.~Zarichnyi, {\em General Asymptology},  VNTL Publ., Lviv, 2007.

\bibitem{Roe} J.~Roe, {\em Lectures on Coarse Geometry}, Univ. Lecture Ser. 31, Amer. Math. Soc., 2003.


%\bibitem{Khelif} A.~Khelif, {\em A propos de la propri\'et\'e de Bergman}, C.R. Acad Paris, {\bf 342}:6 (2006) 377--380.


%\bibitem{MN}  H. D. Macpherson, P. M. Neumann, {\em Subgroups of infinite symmetric groups}, J. London Math. Soc. {\bf 42} (1990) 64--84.

\bibitem{Shelah} S.~Shelah, {\em On a problem of Kurosh, J\'onsson groups, and applications}, Word problems, II (Conf. on Decision Problems in Algebra, Oxford, 1976), pp. 373--394, Stud. Logic Foundations Math., 95, North-Holland, Amsterdam-New York, 1980.

\bibitem{Tol} V.~Tolstykh, {\em On Bergman's property for the automorphism groups of relatively free groups},\newline {\tt https://arxiv.org/abs/math/0406586}.







%\bibitem{Nek} V. Nekrashevych, {\em Uniformly bounded spaces}, Voprosy Algebry, 14 (1999), 47--67.


\end{thebibliography}
\end{document}